\begin{document}
%
%

\newenvironment{myeq}[1][]
{\stepcounter{thm}\begin{equation}\tag{\thethm}{#1}}
{\end{equation}}
\newenvironment{myeqn}[2][]
{\stepcounter{thm}\begin{equation}
\tag{\thethm}{#1}\vcenter{{#2}}}{\end{equation}}
\newcommand{\mydiag}[2][]{\myeq[#1]\xymatrix{#2}}
%
%
\newcommand{\mydiagram}[2][]
{\stepcounter{thm}\begin{equation}
    \tag{\thethm}{#1}\vcenter{\xymatrix{#2}}\end{equation}}
%
\newenvironment{mysubsection}[2][]
{\begin{subsec}\begin{upshape}\begin{bfseries}{#2.}
\end{bfseries}{#1}}
{\end{upshape}\end{subsec}}
\newenvironment{mysubsect}[2][]
{\begin{subsec}\begin{upshape}\begin{bfseries}{#2\vsn.}
\end{bfseries}{#1}}
{\end{upshape}\end{subsec}}
%
%
\newcommand{\sect}{\setcounter{thm}{0}\section}
\newcommand{\secta}{\setcounter{thm}{0}\section*}
\numberwithin{equation}{section}
\renewcommand{\theequation}{\thesection.\arabic{equation}}
\numberwithin{figure}{section}
\renewcommand{\thefigure}{\thesection.\arabic{figure}}
%
%
\theoremstyle{plain}
\swapnumbers
    \newtheorem{thm}{Theorem}[section]
    \newtheorem{rthm}[thm]{Resolution Theorem}
    \newtheorem{drthm}[thm]{Dual Resolution Theorem}
    \newtheorem{prop}[thm]{Proposition}
    \newtheorem{lemma}[thm]{Lemma}
    \newtheorem{hlemma}[thm]{Hauptlemma}
    \newtheorem{tlemma}[thm]{Transport Lemma}
    \newtheorem{cor}[thm]{Corollary}
    \newtheorem{fact}[thm]{Fact}
    \newtheorem{subsec}[thm]{}
    \newtheorem*{thma}{Theorem A}
    \newtheorem*{thmb}{Theorem B}
\theoremstyle{definition}
    \newtheorem{assume}[thm]{Assumption}
    \newtheorem{defn}[thm]{Definition}
    \newtheorem{example}[thm]{Example}
    \newtheorem{examples}[thm]{Examples}
    \newtheorem{claim}[thm]{Claim}
    \newtheorem{notn}[thm]{Notation}
    \newtheorem{conv}[thm]{Convention}
\theoremstyle{remark}
        \newtheorem{remark}[thm]{Remark}
    \newtheorem{ack}[thm]{Acknowledgements}
%
%
%
\newcommand{\xra}[1]{\xrightarrow{#1}}
\newcommand{\xla}[1]{\xleftarrow{#1}}
\newcommand{\xsim}{\xrightarrow{\sim}}
\newcommand{\hra}{\hookrightarrow}
\newcommand{\epic}{\to\hspace{-5 mm}\to}
\newcommand{\adj}[2]{\substack{{#1}\\ \rightleftharpoons \\ {#2}}}
\newcommand{\ccsub}[1]{\circ\sb{#1}}
\newcommand{\DEF}{:=}
\newcommand{\EQUIV}{\Leftrightarrow}
\newcommand{\hsp}{\hspace{10 mm}}
\newcommand{\hs}{\hspace*{5 mm}}
\newcommand{\hsm}{\hspace*{3 mm}}
\newcommand{\hsn}{\hspace*{2 mm}}
\newcommand{\vsm}{\vspace*{2 mm}}
\newcommand{\vsn}{\vspace{1 mm}}
\newcommand{\vs}{\vspace{4 mm}}
\newcommand{\vsp}{\vspace{7 mm}}
\newcommand{\rest}[1]{\lvert\sb{#1}}
\newcommand{\sotimes}{\overline{\otimes}}
\newcommand{\sbu}{\sb{\bullet}}
\newcommand{\ubu}{\sp{\bullet}}
\newcommand{\lra}[1]{\langle{#1}\rangle}
\newcommand{\llrra}[1]{\langle\langle{#1}\rangle\rangle}
\newcommand{\lo}[1]{\sb{({#1})}}
\newcommand{\up}[1]{\sp{({#1})}}
\newcommand{\upb}[1]{\sp{[{#1}]}}
\newcommand{\Fud}[2]{F\sp{#1}\lo{#2}}
\newcommand{\Gud}[2]{G\sp{#1}\lo{#2}}
\newcommand{\sud}[2]{\sigma\sp{#1}\lo{#2}}
%
%
\newcommand{\wA}{\widehat{A}}
\newcommand{\whC}{\widehat{C}\sb{\ast}}
\newcommand{\vare}{\varepsilon}
\newcommand{\es}[1]{e\sp{#1}}
\newcommand{\hmu}[1]{\mu\sp{#1}}
\newcommand{\tmu}[1]{\widetilde{\mu}\,\sp{#1}}
\newcommand{\thl}{\widetilde{\theta}\sp{L}}
\newcommand{\thr}{\widetilde{\theta}\sp{R}}
\newcommand{\thij}[2]{\widehat{\theta}\sp{({#1},{#2})}}
\newcommand{\htl}{\theta\sp{L}}
\newcommand{\htr}{\theta\sp{R}}
\newcommand{\htij}[2]{\theta\sp{({#1},{#2})}}
\newcommand{\rh}{\mathfrak{p}}
\newcommand{\wrh}[2]{\widetilde{\sigma}\sp{#1}\sb{#2}}
\newcommand{\di}[1]{\partial\sb{#1}}
\newcommand{\uP}[1]{{{#1}\sp{I}}}
\newcommand{\uPb}[1]{{({#1})\sp{I}}}
\newcommand{\uPn}[2]{{#2}\sp{I\sp{#1}}}
\newcommand{\uPnb}[2]{({#2})\sp{I\sp{#1}}}
\newcommand{\Ws}{W\sb{\ast}}
\newcommand\rH{{\rm H}}
\newcommand{\bN}{{\mathbb N}}
\newcommand\bR{{\mathbb R}}
\newcommand{\bZ}{{\mathbb Z}}
\newcommand{\bZO}{\bZ\sb{\otimes}}
\newcommand{\hZO}{\widehat{\bZ}\sb{\otimes}}
\newcommand{\bF}{\mathbb F}
\newcommand{\Fp}{\bF\sb{p}}
\newcommand{\Arr}{\operatorname{Arr}}
\newcommand{\Aut}{\operatorname{Aut}}
\newcommand{\colim}{\operatorname{colim}}
\newcommand{\comp}{\operatorname{comp}}
\newcommand{\Cone}[1]{\operatorname{Cone}({#1})}
\newcommand{\Ext}{\operatorname{Ext}}
\newcommand{\ho}{\operatorname{ho}}
\newcommand{\Ho}{\operatorname{Ho}}
\newcommand{\Hom}{\operatorname{Hom}}
\newcommand{\hHom}{\underline{\Hom}}
\newcommand{\uHom}{\underline{\mathbf{Hom}}}
\newcommand{\map}{\operatorname{map}}
\newcommand{\mapa}{\map\sb{\ast}}
\newcommand{\mapM}{\map\sb{\eM}}
\newcommand{\mapt}{\mathbf{map}\sb{\ast}}
\newcommand{\maps}{\mathbf{map}\sb{\Sp}}
\newcommand{\Map}[1]{\mbox{\textbf{map}}\sb{#1}}
\newcommand{\MapC}{\Map{\eC}}
\newcommand{\MapCp}{\Map{\eC'}}
\newcommand{\MapK}{\Map{K}}
\newcommand{\MapL}{\Map{L}}
\newcommand{\Mor}{\operatorname{Mor}}
\newcommand{\sk}[1]{\operatorname{sk}\sb{#1}}
\newcommand{\Obj}{\operatorname{Obj}\,}
%
%
\newcommand{\eA}{\EuScript A}
\newcommand{\eC}{\EuScript C}
\newcommand{\eD}{\EuScript D}
\newcommand{\eI}{\EuScript I}
\newcommand{\eL}{\EuScript L}
\newcommand{\eM}{\EuScript M}
\newcommand{\eS}{\EuScript S}
\newcommand{\Sa}{\eS\sb{\ast}}
\newcommand{\dK}{\mathcal{K}}
\newcommand{\dL}{\mathcal{L}}
\newcommand{\dZ}{\mathcal{Z}}
\newcommand{\op}{\sp{\operatorname{op}}}
%
%
\newcommand{\Ab}{\mbox{\sf Ab}}
\newcommand{\AbGp}{\mbox{\sf AbGp}}
\newcommand{\Cat}{\mbox{\sf Cat}}
\newcommand{\Ch}{\mbox{\sf Ch}}
\newcommand{\ChZ}{\Ch\sb{\bZ}}
\newcommand{\ChR}{\Ch\sb{R}}
\newcommand{\ChRz}{\ChR\sp{\geqslant 0}}
\newcommand{\ModR}{\mbox{\sf Mod}\sb{R}}
\newcommand{\grM}{\mbox{\sf grMod}\sb{R}}
\newcommand{\grMz}{\grM\sp{\geqslant 0}}
\newcommand{\Set}{\mbox{\sf Set}}
\newcommand{\Seta}{\Set\sb{\ast}}
\newcommand{\Sp}{\mbox{\sf Sp}}
\newcommand{\Top}{\mbox{\sf Top}}
\newcommand{\Topa}{\Top\sb{\ast}}
\newcommand{\squa}{\pmb \square}
\newcommand{\LBox}{\overline{\squa}}
\newcommand{\tria}{\pmb \triangle}
\newcommand{\hy}[2]{{#1}\text{-}{#2}}
\newcommand{\MCat}{\hy{\eM}{\Cat}}
%
%
\newcommand{\nul}[1]{\operatorname{nul}\sb{#1}}
\newcommand{\nnC}{\nul{n}\eC}
\newcommand{\Nul}[1]{\operatorname{Nul}\sb{#1}}
\newcommand{\NnC}{\Nul{n}\eC}
\newcommand{\NlnC}{\Nul{\leq n}\eC}
\newcommand{\II}[2]{I\sp{#1}\sb{#2}}
\newcommand{\Ic}[1]{{\mathcal I}\sp{#1}}
\newcommand{\li}[1]{\sb{#1}}
\newcommand{\lb}[1]{\sb{({#1})}}
\newcommand{\uv}{\lb{u,v}}
%
%
\newcommand{\wh}{~--\ }
\newcommand{\wwh}{--\ }
\newcommand{\w}[2][ ]{\ \ensuremath{#2}{#1}\ }
\newcommand{\ww}[1]{\ \ensuremath{#1}}
\newcommand{\www}[2][ ]{\ensuremath{#2}{#1}\ }
\newcommand{\wwb}[1]{\ \ensuremath{(#1)}-}
\newcommand{\wb}[2][ ]{\ (\ensuremath{#2}){#1}\ }
\newcommand{\wbb}[2][ ]{(\ensuremath{#2}){#1}\ }
\newcommand{\wref}[2][ ]{\ \eqref{#2}{#1}\ }
%
%
\newcommand{\Id}{\operatorname{Id}}
\newcommand{\inc}{\operatorname{inc}}
\newcommand{\Image}{\operatorname{Im}}
\newcommand{\Ker}{\operatorname{Ker}}
%
%
\newcommand{\cE}{\mathcal{E}\sb{\ast}}
\newcommand{\cF}{\mathcal{F}\sb{\ast}}
\newcommand{\cG}{\mathcal{G}\sb{\ast}}
\newcommand{\ccH}{\mathcal{H}\sb{\ast}}
\newcommand{\cI}{\mathcal{I}}
\newcommand{\cK}{\mathcal{K}}
\newcommand{\cM}{\mathcal{M}}
\newcommand{\cN}{\mathcal{N}}
\newcommand{\cV}{\mathcal{V}\sb{\ast}}
\newcommand{\cT}{\mathcal{T}}
%
%
\newcommand{\n}{{\mathbf{\langle n \rangle}}}
\newcommand{\pn}{{\mathbf{P\sp{n}}\sb{\ast}}}
\newcommand{\pnp}{{\mathbf{P\sp{n+1}\sb{\ast}}}}
\newcommand{\bA}{{\mathbf{A}\sb{\ast}}}
\newcommand{\bB}{{\mathbf{B}\sb{\ast}}}
\newcommand{\bC}{{\mathbf{C}\sb{\ast}}}
\newcommand{\bD}{{\mathbf{D}\sb{\ast}}}
\newcommand{\bK}{{\mathbf{K}\sb{\ast}}}
\newcommand{\bL}{{\mathbf{L}\sb{\ast}}}
\newcommand{\bM}[2]{\mathbf{M}({#1},{#2})\sb{\ast}}
\newcommand{\tMn}[3]{\widetilde{\mathbf{M}}({#1},{#2})\sb{#3}}
\newcommand{\tM}[2]{\tMn{#1}{#2}{\ast}}
\newcommand{\Mt}[2]{\mathbf{M}({#1},{#2})}
\newcommand{\bX}{{\mathbf{X}\sb{\ast}}}
%
%
\newcommand{\fn}[3]{\sb{#1}\sp{{#2}{#3}}}
\newcommand{\znz}{\fn{n}{0}{0}}
\newcommand{\znb}{\fn{n}{0}{1}}
\newcommand{\bnz}{\fn{n}{1}{0}}
\newcommand{\bnb}{\fn{n}{1}{1}}
\newcommand{\znmz}{\fn{n-1}{0}{0}}
\newcommand{\znmb}{\fn{n-1}{0}{1}}
\newcommand{\bnmz}{\fn{n-1}{1}{0}}
\newcommand{\bnmb}{\fn{n-1}{1}{1}}
\newcommand{\znmmz}{\fn{n-2}{0}{0}}
\newcommand{\znmmb}{\fn{n-2}{0}{1}}
\newcommand{\bnmmz}{\fn{n-2}{1}{0}}
\newcommand{\bnmmb}{\fn{n-2}{1}{1}}
\newcommand{\znpz}{\fn{n+1}{0}{0}}
\newcommand{\znpb}{\fn{n+1}{0}{1}}
\newcommand{\bnpz}{\fn{n+1}{1}{0}}
\newcommand{\bnpb}{\fn{n+1}{1}{1}}
\newcommand{\znppz}{\fn{n+2}{0}{0}}
\newcommand{\znppb}{\fn{n+2}{0}{1}}
\newcommand{\bnppz}{\fn{n+2}{1}{0}}
\newcommand{\bnppb}{\fn{n+2}{1}{1}}
%
%
\newcommand{\as}[2]{{#1}\sb{#2}}
\newcommand{\Qz}{\as{Q}{0}}
\newcommand{\Qb}{\as{Q}{1}}
\newcommand{\s}{\sharp}
\newcommand{\sH}{\s(H)}
\newcommand{\sE}{\s(E)}
\newcommand{\Sprime}{S\sp{\prime}}
\newcommand{\Tprime}{T\sp{\prime}}
%
%
\newcommand{\pzz}{\sp{00}}
\newcommand{\pzb}{\sp{01}}
\newcommand{\pbz}{\sp{10}}
\newcommand{\pbb}{\sp{11}}
\newcommand{\pST}{\sp{S,T}}
\newcommand{\pSpTp}{\sp{\Sprime,\Tprime}}
%
%
\newcommand{\parm}[2]{\sb{#1}\sp{#2}}
\newcommand{\na}{\parm{n}{\bA}}
\newcommand{\nma}{\parm{n-1}{\bA}}
\newcommand{\nmma}{\parm{n-2}{\bA}}
\newcommand{\npa}{\parm{n+1}{\bA}}
\newcommand{\nb}{\parm{n}{\bB}}
\newcommand{\nmb}{\parm{n-1}{\bB}}
\newcommand{\npb}{\parm{n+1}{\bB}}
\newcommand{\nppb}{\parm{n+2}{\bB}}
\newcommand{\nc}{\parm{n}{\bC}}
\newcommand{\nmc}{\parm{n-1}{\bC}}
\newcommand{\npc}{\parm{n+1}{\bC}}
\newcommand{\nppc}{\parm{n+2}{\bC}}
\newcommand{\npppc}{\parm{n+3}{\bC}}
\newcommand{\nd}{\parm{n}{\bD}}
\newcommand{\nmd}{\parm{n-1}{\bD}}
\newcommand{\npd}{\parm{n+1}{\bD}}
\newcommand{\nppd}{\parm{n+2}{\bD}}
\newcommand{\npppd}{\parm{n+3}{\bD}}
\newcommand{\pih}[2]{[{#1},\,{#2}]}
\newcommand{\susp}{{\mathbf{\Sigma}}}
\newcommand{\tsusp}{\widetilde{\susp}}
\newcommand{\lop}[1]{\widetilde{\Omega}\sp{#1}}
%
%
%
\title{Higher Toda brackets and Massey products}
\author[H.-J.~Baues]{Hans-Joachim Baues}
\address{Max-Planck-Institut f\"{u}r Mathematik\\
Vivatsgasse 7\\ 53111 Bonn, Germany}
\email{baues@mpim-bonn.mpg.de}
\author[D.~Blanc]{David Blanc}
\address{Department of Mathematics\\ University of Haifa\\ 31905 Haifa, Israel}
\email{blanc@math.haifa.ac.il}
\author[S.~Gondhali]{Shilpa Gondhali}
\address{Department of Mathematics\\ University of Haifa\\ 31905 Haifa, Israel}
\email{sgondhal@math.haifa.ac.il, shilpa.s.gondhali@gmail.com}
\date{\today}
\subjclass{Primary: 18G55; \ secondary: 55S20, 55S30, 55Q35, 18D20}
\keywords{higher order homotopy operation, higher order cohomology
operation, Toda bracket, Massey product, chain complex, enriched category,
path object, monoidal model category}

\begin{abstract}
We provide a uniform definition of higher order Toda brackets in a general setting,
covering the known cases of long Toda brackets for topological spaces and Massey
products for differential graded algebras, among others.
\end{abstract}
\maketitle

\setcounter{section}{0}

%
%
\section*{Introduction}
\label{cint}

Toda brackets and Massey products have played an important role in homotopy
theory ever since they were first defined in \cite{MassN} and \cite{TodG,TodC}:
in applications, such as \cite{AdHI,BJMahT,MPetS}, and in a more theoretical
vein, as in \cite{AdSS,BauA,HellSH,KristS,MargS,SagaU,SpanS}.
There are a number of variants (see, e.g., \cite{JCAlexC,HKMarC,MizuM,PSteS} and
\cite[\S 3.6.4]{BauO}), as well as higher order versions including
\cite{KlauT,KraiM,KMadE,MaunC,MoriHT,GPorW,GPorH,RetaL,SpanH,GWalkL}.
In recent years they have appeared in many other areas of mathematics,
including symplectic geometry, representation theory, deformation theory,
topological robotics, number theory, mathematical physics, and algebraic geometry
(see \cite{BTaimM,BKSchwR,FWeldM,GranT,KimM,LaurTB,LStreL,RizzI}).

Toda brackets were originally defined for diagrams of the form
\begin{myeq}\label{eqtodabrackets}
S\sp{n}~\xra{f}~S\sp{p}~\xra{g}~S\sp{k}~\xra{h}~X~,
\end{myeq}
\noindent with \w{g\circ f} and \w{h\circ g} nullhomotopic.

If we choose nullhomotopies  \w{F:g\circ f\sim 0} and
\w[,]{G:h\circ g\sim 0} they fit into a diagram of cones as in
Figure \ref{figtb}:

\setcounter{figure}{\value{thm}}\stepcounter{subsection}
\begin{figure}[htbp]
\begin{center}
%
%
\begin{picture}(367,130)(-20,-10)
%
%
\put(-13,46){$S^{n+1}$}
\put(20,0){\line(0,1){45}}
\put(25,0){\oval(10,15)[bl]}
\put(15,45){\oval(10,15)[tr]}
\put(20,60){\line(0,1){40}}
\put(25,100){\oval(10,15)[tl]}
\put(15,60){\oval(10,15)[br]}
%
%
\put(44,46){{\scriptsize $S^{n}$}}
\put(30,50){\line(1,3){20}}
\put(70,50){\line(-1,3){20}}
\put(42,66){{\scriptsize $CS^{n}$}}
\bezier{150}(30,50)(55,70)(70,50)
\bezier{150}(30,50)(45,30)(70,50)
\put(30,50){\line(1,-3){20}}
\put(70,50){\line(-1,-3){20}}
\put(42,24){{\scriptsize $CS^{n}$}}
\put(75,50){\vector(1,0){50}}
\put(95,55){{\scriptsize $f$}}
\put(67,95){\vector(1,0){70}}
\put(95,100){{\scriptsize $Cf$}}
\put(75,5){\vector(4,1){160}}
\put(160,17){{\scriptsize $F$}}
%
%
\put(149,46){{\scriptsize $S^{p}$}}
\put(135,50){\line(1,3){20}}
\put(175,50){\line(-1,3){20}}
\put(147,65){{\scriptsize $CS^{p}$}}
\bezier{150}(135,50)(155,70)(175,50)
\bezier{150}(135,50)(155,30)(175,50)
\put(185,50){\vector(1,0){45}}
\put(200,55){{\scriptsize $g$}}
\put(170,95){\vector(4,-1){137}}
\put(245,80){{\scriptsize $G$}}
%
%
\bezier{150}(237,50)(253,67)(263,50)
\bezier{150}(237,50)(247,33)(263,50)
\put(245,46){{\scriptsize $S^{k}$}}
\put(268,50){\vector(1,0){35}}
\put(285,53){{\scriptsize $h$}}
%
%
\bezier{150}(310,50)(326,67)(336,50)
\bezier{150}(310,50)(322,33)(336,50)
\put(321,47){{\scriptsize $X$}}

\end{picture}
\caption{\label{figtb}The Toda bracket construction}
\end{center}
\end{figure}
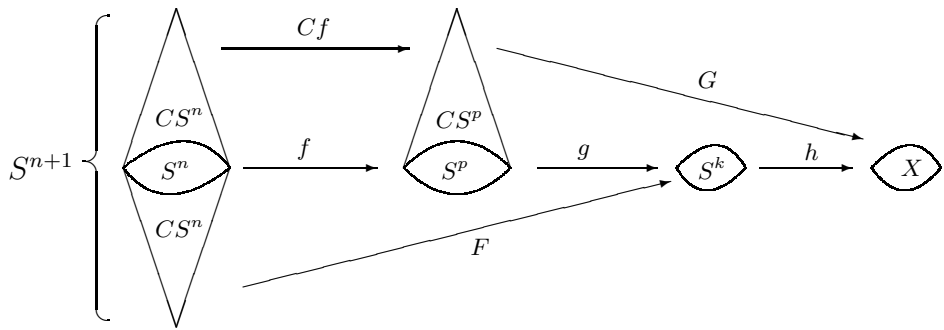
\setcounter{thm}{\value{figure}}

This yields an element \w{\lra{h,g,f}} in \w[,]{[S\sp{n+1},\,X]} called the
\emph{Toda bracket}. The value we get depends on the choices of nullhomotopies 
$F$ and $G$, so it is not uniquely determined.
The Toda bracket is thus more properly a certain double coset of
\w[.]{h\sb{\#}\pi\sb{n+1}(S\sp{k})+\Sigma f\sp{\#}\pi\sb{p+1}(X)}

If we view \w{[h]} as an element in \w[,]{\pi\sb{\ast}X} while \w{[g]}
is seen as a primary homotopy operation acting trivially on \w{[f]} and
\w{[h]\circ[g]=0} is a relation among primary operations, we can think of the Toda
bracket as a secondary homotopy operation.
Similarly, a diagram of the form
\begin{myeq}\label{eqtodabracketc}
X~\xra{f}~K(G,n)~\xra{g}~K(G',p)~\xra{h}~K(G'',k)
\end{myeq}
\noindent with \w{g\circ f\sim 0 \sim h\circ g} defines a secondary cohomology
operation in the sense of \cite{AdHI}.

On the other hand, the \emph{Massey product} in cohomology \wh
defined whenever we have three classes \w{\alpha,\beta,\gamma\in H\sp{\ast}X}
with \w{\alpha\cdot\beta=0=\beta\cdot\gamma} \wwh is a different type of
secondary cohomology operation which does not fit into this paradigm.

All three examples have higher order versions, though the precise definitions
are not always self-evident or unique (cf.\ \cite{GWalkL} and \cite{MaunC,KlauT}).
Nevertheless, these higher order operations play an
important role in homotopy theory \wh for instance, in enhancing our theoretical
understanding of spectral sequences (cf.\ \cite{BBlaC}) and in providing a
conceptual full invariant for homotopy types of spaces (see \cite{TanrH} and
\cite{BJTurHA}).

The main goal of this note is to explain that higher order Toda brackets and
higher Massey products have a uniform description, covering all cases known to the
authors (including both the homotopy and cohomology versions).

The setting for our general notion of higher Toda brackets is any category
$\eC$ enriched in a suitable monoidal category $\eM$. In fact, the
minimal context in which higher Toda brackets can be defined is just an enrichment
in a monoidal category equipped with a certain structure of ``null cubes'',
encoded by the existence of an augmented path space functor \w{PX\to X}
satisfying certain properties (abstracted from those enjoyed by the usual
path fibration of topological spaces). We call such an $\eM$ a
\emph{monoidal path category} \wh see Section \ref{cpfmc}.

In this context we can define the notion of a higher order chain complex:
that is, one in which the identity \w{\partial\partial=0} holds only up to a
sequence of coherent homotopies (see Section \ref{chocc}). This suffices
to allow us to \emph{define} the values of the corresponding higher order
Toda bracket (see Section \ref{chtb}, where higher Massey products are also
discussed).

However, in order for these Toda brackets to enjoy the expected properties,
such as homotopy invariance, $\eM$ must be also be a simplicial model category.
In this case there is a model category structure on the category \w{\MCat} of
categories enriched in $\eM$, due to Lurie, Berger and Moerdijk, and others,
in which the weak equivalences are Dwyer-Kan equivalences (see \S \ref{ddke}).
This is explained in Section \ref{chtbmc}, where we prove:

%
%
\begin{thma}
Higher Toda brackets are preserved under Dwyer-Kan equivalences.
\end{thma}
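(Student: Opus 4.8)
The plan is to reduce the assertion to the statement that the higher Toda bracket of a coherent higher chain complex is a homotopy-invariant construction on the level of the enriched category $\eC$, and then to invoke the model structure on $\MCat$. First I would make precise the input data: a higher Toda bracket is extracted from a diagram in $\eC$ which, unwound, is nothing but a functor from a fixed ``universal'' $\eM$-enriched indexing category $\eD\sb{n}$ (the one encoding an $n$-stage higher chain complex, built from the null-cube/path structure of Section \ref{chocc}) into $\eC$; the bracket itself is then read off as a homotopy class of maps in a mapping object $\MapC(A,B)$, obtained by choosing the coherence nullhomotopies, i.e.\ by lifting along the augmented path functor $PX\to X$. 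So a Dwyer--Kan equivalence $\Phi\colon\eC\to\eC'$ should carry such a diagram to the corresponding diagram in $\eC'$, and the claim becomes: the induced map on the relevant mapping objects sends the set of possible bracket values onto the set of possible bracket values on the other side.

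Second, I would set up the homotopy-invariance statement in the correct derived form. Since $\MCat$ is a model category (\S\ref{ddke}) whose weak equivalences are the Dwyer--Kan equivalences, and the universal indexing category $\eD\sb{n}$ is cofibrant (or can be replaced by a cofibrant model), a DK-equivalence $\Phi\colon\eC\to\eC'$ induces a weak equivalence of the derived mapping spaces, and in particular a bijection
\[
\ho\eC(\eD\sb{n},\eC)\ \xra{\ \sim\ }\ \ho\eC(\eD\sb{n},\eC')
\]
on homotopy classes of $\eM$-functors out of $\eD\sb{n}$. The point is that a choice of coherent nullhomotopies for the higher chain complex is exactly an extension of a given ``boundary'' diagram over $\eD\sb{n}$, and the value of the Toda bracket is the homotopy class of the resulting composite in $\MapC$. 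Because $\Phi$ is a DK-equivalence it is in particular essentially surjective and induces $\pi\sb{\ast}$-isomorphisms on all mapping objects $\MapC(X,Y)\to\MapCp(\Phi X,\Phi Y)$; I would use this to show both that every bracket value on the $\eC$ side maps to a bracket value on the $\eC'$ side (easy: apply $\Phi$ to the chosen nullhomotopies) and, conversely, that every choice of nullhomotopies in $\eC'$ can be realized, up to the indeterminacy, by a choice in $\eC$ (here one lifts the coherence data across the $\pi\sb{0}$- and $\pi\sb{1}$-isomorphisms of the mapping objects, inductively over the stages of the chain complex). This last lifting is where the bulk of the work lies.

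The main obstacle, then, is the inductive lifting of a coherent system of nullhomotopies across a DK-equivalence: at each stage one must choose a preimage of a homotopy in $\MapCp$ living in $\MapC$, and then correct the already-chosen lower-stage data so that the new choice is strictly compatible with the path-functor structure rather than merely compatible up to homotopy. I expect this to require a cofibrancy/fibrancy argument — replacing $\eC$ and $\eC'$ by a cofibrant--fibrant model in $\MCat$, so that $\Phi$ becomes a trivial fibration or a homotopy equivalence with an actual inverse up to homotopy — together with the obstruction-theoretic bookkeeping that the null-cube axioms on $\eM$ were designed to make vanish. Once this lifting is in place, the equality of the two double cosets (value sets modulo indeterminacy) is a formal consequence, and the theorem follows. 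A secondary, more routine, point to check is that the indeterminacy subgroups themselves correspond under $\Phi$, which is immediate from the $\pi\sb{\ast}$-isomorphism on the outer mapping objects $\MapC(A,B)$.
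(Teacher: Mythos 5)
Your outer strategy \wwh work in the model category \w[,]{\MCat} replace source and target by fibrant--cofibrant objects, factor, and reduce to the case of a trivial fibration or a homotopy equivalence with strict inverse \wwh is exactly the architecture of parts (b) and (c) of the paper's proof of Theorem \ref{tdke}. But the heart of the theorem is precisely the step you defer: given a trivial fibration \w{\phi} of fibrant \ww{\eM}-categories and an \wwb{n-1}st order chain complex extending \w[,]{\dL\up{0}} produce a \emph{strict} \wwb{n-1}st order chain complex extending \w{\dK\up{0}} mapping to it, and show the values correspond. Writing ``I expect this to require a cofibrancy/fibrancy argument together with obstruction-theoretic bookkeeping'' leaves this unproved. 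In the paper this is an induction on the order $k$, in which one forms the pullback \w{Q\sb{i}} of \w{P\sp{k}\MapL(a\sb{i},a\sb{i-k-1})\to\lop{k-1}\MapL(a\sb{i},a\sb{i-k-1})\leftarrow\lop{k-1}\MapK(a\sb{i},a\sb{i-k-1})}, observes that the comparison map from \w{P\sp{k}\MapK(a\sb{i},a\sb{i-k-1})} is a weak equivalence, factors it, and lifts against it using cofibrancy of \w[;]{\gamma\sb{i-k}\otimes\dotsc\otimes\gamma\sb{i}} this requires knowing that \w{P\sp{k}} and \w{\lop{k}} preserve (trivial) fibrations (Lemma \ref{lloops}) and that \w{\wrh{k-1}{X}:P\sp{k}X\to\lop{k-1}X} is a fibration (Lemma \ref{lpathloop}). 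Nothing in your outline supplies these facts, and without them your proposed ``correction of the already-chosen lower-stage data'' has no mechanism \wwh indeed the point of the trivial-fibration reduction is that one can lift \emph{strictly}, so that no correction of lower stages is ever needed. The injectivity half (values agree upstairs iff they agree downstairs) also needs an argument; the paper gets it from \w{\lop{n-1}\phi} being a trivial fibration, hence inducing an isomorphism on \w{\pi\sb{n-1}\mapM(\gamma\sb{1}\otimes\dotsc\otimes\gamma\sb{n+1},-)} by SM7.

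A second gap is your universal indexing category \w[:]{\eD\sb{n}} it is never constructed, its cofibrancy in \w{\MCat} is asserted without argument, and it is not clear that the data of a higher order chain complex \wwh generalized elements \w{\gamma\sb{i-k}\otimes\dotsc\otimes\gamma\sb{i}\to P\sp{k}\MapK(a\sb{i},a\sb{i-k-1})} for core objects, compatible under the \w{\di{t}} \wwh is corepresented by \ww{\eM}-functors out of a single enriched category, since the path functor and the core enter the definition in a way that is not visibly of that form. Even granting such a \w[,]{\eD\sb{n}} a bijection on homotopy classes of functors \w{\eD\sb{n}\to\eC} does not by itself give a bijection between \w{\llrra{\dK\up{0}}} and \w[:]{\llrra{\dL\up{0}}} these are sets of elements of \w{\pi\sb{n-1}\mapM(\gamma\sb{1}\otimes\dotsc\otimes\gamma\sb{n+1},\MapK(a\sb{n+1},a\sb{0}))} attached to strict extensions of a \emph{fixed} \w[,]{\dK\up{0}} so one needs a relative (under a fixed boundary) statement together with the identification of values \wwh which is exactly what the paper's explicit lifting provides. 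So the proposal is a reasonable plan in the same spirit as the paper, but the decisive steps are missing.
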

\noindent [See Theorem \ref{tdke} below].

We also show that the usual higher Massey products in a differential graded algebra
correspond to our definition (see Proposition \ref{pmassey}).

In Section \ref{ctbcc} we study the case of ordinary
Toda brackets for chain complexes, and show their interpretation as
secondary $\Ext$-operations.

\begin{notn}\label{snac}
The category of sets will be denoted by \w[,]{\Set} that of compactly
generated topological spaces by \w{\Top} (cf.\ \cite{SteCC}, and
compare \cite{VogtCC}), and that of pointed compactly generated spaces
by \w[.]{\Topa}

If $R$ is a commutative ring with unit, the category of $R$-modules will be
denoted by \w{\ModR} (though that of abelian groups will be denoted simply by
\w[).]{\AbGp} The category of non-negatively graded $R$-modules will be denoted by
\w[,]{\grMz} with objects \w[,]{\cE=\{E\sb{n}\}\sb{n\geq 0}} and so on.

The category of $\bZ$-graded chain complexes over \w{\ModR} will be denoted
by \w[,]{\ChR} with objects \w[,]{\bA} \w[,]{\bB} and so on, where
$$
\bA~\DEF~(\ldots A_{n}\xrightarrow{\partial_n} A_{n-1}
\xrightarrow{\partial_{n-1}}A_{n-2}
\xrightarrow{\partial_{n-2}} A_{n-3}\ldots)~.
$$
\noindent The category of nonnegatively graded chain complexes over \w{\ModR}
will be denoted by \w[.]{\ChRz}
A chain map \w{f:\bA\to\bB} inducing an isomorphism
\w{f\sb{\ast}\colon H\sb{n}\bA \longrightarrow H\sb{n}\bB} for all $n$ is
called a \emph{quasi-isomorphism}.

Finally, the category of simplicial sets will be denoted by $\eS$, and that of
pointed simplicial sets by \w[.]{\Sa}
\end{notn}

\begin{ack}
We wish to thank Stefan Schwede for a helpful pointer on symmetric spectra.
\end{ack}

%
%
\sect{Path functors in monoidal categories}
\label{cpfmc}

Higher order homotopy operations in a pointed model category $\eC$, such as
\w[,]{\Topa} \www[,]{\Sa} or \w[,]{\ChR} are usually described in terms of higher
order homotopies, which can be defined in turn in terms of an enrichment of $\eC$
in an appropriate monoidal model category $\eM$ (see, e.g., \cite{BJTurH}).
We here abstract the minimal properties of such an $\eM$ needed for the
construction of higher operations.

\begin{defn}\label{dsqmc}
A \emph{monoidal path category} is a functorially complete and cocomplete
pointed monoidal category \w[,]{\lra{\eM,\otimes,1}} equipped with an \emph{path}
endofunctor \w{P:\eM\to\eM} and natural transformations \w[,]{\rh\sb{X}:PX\to X}
\w[,]{\htl:PX\otimes Y\to P(X\otimes Y)} and
\w[.]{\htr:X\otimes PY\to P(X\otimes Y)}

We require that the following diagrams commute:

\begin{enumerate}
\renewcommand{\labelenumi}{(\alph{enumi})~}
\item Constant path combinations:
\mydiagram[\label{eqpnatt}]{
PX\otimes Y \ar[rr]^{\htl} \ar[d]^{\rh\sb{X}\otimes\Id\sb{Y}} &&
P(X\otimes Y) \ar[d]_{\rh\sb{X\otimes Y}} &&
X\otimes PY \ar[rr]^{\htr} \ar[d]^{\Id\sb{X}\otimes\rh\sb{Y}} &&
P(X\otimes Y) \ar[d]_{\rh\sb{X\otimes Y}} \\
X\otimes Y \ar[rr]^{=} && X\otimes Y && X\otimes Y \ar[rr]^{=}  && X\otimes Y
}

\item Coalgebra structure:
\mydiagram[\label{eqsimpid}]{
P(PX) \ar[rr]^{P(\rh\sb{X})} \ar[d]_{\rh\sb{PX}} && PX \ar[d]^{\rh\sb{X}} \\
PX \ar[rr]^{\rh\sb{X}} && X.
}
\item Left and right constants:
\mydiagram[\label{eqchtheta}]{
PX\otimes PY \ar[rr]^{\htr} \ar[d]_{\htl} &&
P(PX\otimes Y) \ar[d]^{P\htl} \\
P(X\otimes PY) \ar[rr]_{P\htr} && P\sp{2}(X\otimes Y)
}
\item From \wref{eqchtheta} we see that there are natural transformations
$$
\htij{i}{j}~:~P\sp{i}X\otimes P\sp{j}Y~\to~P\sp{i+j}(X\otimes Y)
$$
\noindent for any \w[,]{i,j\geq 0} defined
$$
\htij{i}{j}~:=~P\sp{i+j-1}(\htl)\circ\cdots\circ P\sp{j}(\htl)\circ
P\sp{j-1}(\htr)\circ\cdots\circ \htr~.
$$
\noindent These are required to be associative, in the obvious sense.
\item If we let \w{P\sp{n}X} denote the result of applying the functor
\w{P:\eM\to\eM} to $X$ $n$ times (with \w[),]{P\sp{0}\DEF\Id\sb{\eM}}
we have \w{n+1} different natural transformations
\w{\di{i}\sp{n}:P\sp{n+1}X\to P\sp{n}X} \wb[,]{i=0,\dotsc, n} defined
\begin{myeq}\label{eqsimpface}
\di{i}~=~\di{i}\sp{n}~\DEF~P\sp{i}(\rh\sb{P\sp{n-i}X})~.
\end{myeq}
\noindent The natural transformations \w{\htij{i}{j}} are required to satisfy
the identities:
\begin{myeq}\label{eqsimpmonoi}
\di{k}\sp{n-1}\circ \htij{i}{j}=
\begin{cases}
\htij{i-1}{j}\circ(\di{k}\sp{i-1}\otimes\Id) & \text{if}\hsn 0\leq k<i\\
\htij{i}{j-1}\circ(\Id\otimes\di{k-i}\sp{j-1}) & \text{if}\hsn i\leq k<n
\end{cases}
\end{myeq}
\noindent for every \w[.]{0\leq k<i+j=n}
\end{enumerate}
\end{defn}

\begin{remark}
The commutativity of  \wref{eqsimpid}  implies that the natural
transformations of \wref{eqsimpface} satisfy the usual simplicial identities
\begin{myeq}
\label{eqsimpids}
\di{i}\sp{n-1}\circ \di{j}\sp{n}~=~\di{j-1}\sp{n-1}\circ\di{i}\sp{n}
\end{myeq}
\noindent for all \w[.]{0\leq i<j\leq n}
\end{remark}

\begin{mysubsection}{Paths and cubes}\label{spathcube}
The natural setting where such path categories arise is when a monoidal
category $\eM$ is also \emph{simplicial}, in the sense of \cite[II, \S 1]{QuiH}.
More specifically, we require the existence of an \emph{unpointed path functor}
\w{\uPb{-}:\eM\to\eM} which behaves like a mapping space from the interval
\w[,]{[0,1]}  so we have natural transformations

\begin{enumerate}
\renewcommand{\labelenumi}{(\alph{enumi})~}
\item \w{\es{0},\es{1}:\uP{X}\to X} (evaluation at the two endpoints),
\item \w{s:X\to\uP{X}} with \w{\es{0}s=\es{1}s=\Id} (the constant path),
and
\item \w{\thl:\uP{X}\otimes Y\to\uPb{X\otimes Y}} and
\w{\thr:X\otimes\uP{Y}\to\uPb{X\otimes Y}} (paths in a product).
\end{enumerate}

These make the following diagrams commute:
\mydiagram[\label{eqpmc}]{
\uP{X}\otimes Y \ar[rr]^{\thl} \ar[d]^{\es{i}\sb{X}\otimes\Id\sb{Y}} &&
\uPb{X\otimes Y} \ar[d]_{\es{i}\sb{X\otimes Y}} &
X\otimes\uP{Y} \ar[rr]^{\thr} \ar[d]^{\Id\sb{X}\otimes\es{i}\sb{Y}} &&
\uPb{X\otimes Y} \ar[d]_{\es{i}\sb{X\otimes Y}} \\
X\otimes Y \ar@/^{1.0em}/[u]\sp{s\otimes\Id} \ar[rr]^{=} &&
X\otimes Y \ar@/_{1.0em}/[u]\sb{s} &
X\otimes Y \ar[rr]^{=} \ar@/^{1.0em}/[u]\sp{\Id\otimes s} &&
X\otimes Y \ar@/_{1.0em}/[u]\sb{s}
}
\noindent for \w[,]{i=0,1} as well as
\mydiagram[\label{eqcubid}]{
\uPn{2}{X} \ar[rr]^{\uPb{\es{i}\sb{X}}} \ar[d]_{\es{j}\sb{\uP{X}}} &&
\uP{X} \ar[d]^{\es{j}\sb{X}\hsp\text{and}\hsp} &&
\uP{X}\otimes\uP{Y} \ar[rr]^{\thr} \ar[d]_{\thl} &&
\uPb{\uP{X}\otimes Y} \ar[d]^{\uPb{\thl}}\\
\uP{X} \ar[rr]_{\es{i}\sb{X}} && X &&
\uPb{X\otimes \uP{Y}} \ar[rr]_{\uPb{\thr}} && \uPnb{2}{X\otimes Y}
}
\noindent for \w[.]{i,j\in\{0,1\}}

We may then define the required (pointed) path functor \w{P:\eM\to\eM} by the
functorial pullback diagram:
\mydiagram[\label{eqpath}]{
\ar @{} [drr] |<<<<<{\framebox{\scriptsize{PB}}}
PX \ar[rr] \ar[d] && \uP{X} \ar[d]^{\es{0}} \\
\ast \ar[rr] && X~.
}
The commutativity of the right hand square in \wref{eqcubid}
allows us to define either composite to be the natural transformation
\w[.]{\thij{1}{1}:\uP{X}\otimes\uP{Y}\to\uPnb{2}{X\otimes Y}}

We see that \w{\thl} induces a natural transformation
\w[,]{\htl:PX\otimes Y\to P(X\otimes Y)} and similarly
\w[,]{\htr:X\otimes PY\to P(X\otimes Y)}
making \wref{eqpnatt} commute.

Moreover, from \wref{eqpmc}  we see that
\wref{eqchtheta} commutes, and that the natural transformations
\w{\htij{i}{j}} are associative and satisfy \wref[.]{eqsimpmonoi}
\end{mysubsection}

\begin{example}\label{egtop}
The motivating example is provided by \w[,]{\eM=\Topa} with the
monoidal structure given by the smash product \w[,]{\otimes\DEF\wedge} and
\w{\uP{X}\DEF\mapt(I,X)} the mapping space out of the interval
\w[.]{I\DEF\Delta[1]\sb{+}} Thus \w{PX} is the usual pointed path space.
Here \w{\mapt(X,Y)} denotes the set \w{\Hom\sb{\Topa}(X,Y)} equipped with the
compact-open topology.
\end{example}

\begin{example}\label{egsimpset}
Similarly for \w[,]{\Sa} again with the smash product \w{\otimes\DEF\wedge}
and \w[,]{\uP{X}\DEF\mapa(\Delta[1]\sb{+},X)} where \w{\mapa(X,Y)\in\Sa} denotes
the simplicial mapping space with
\w[.]{\mapa(X,Y)\sb{n}:=\Hom\sb{\Sa}(X\times\Delta[n]\sb{+},Y)}

When $X$ is a Kan complex, we can use Kan's model for
\w[,]{PX} where \w[,]{(PX)_{n}:=\Ker(d_{1}d_{2}\dotsc d_{n+1}:X\sb{n+1}\to X\sb{0})}
and \w{\rh\sb{X}:PX\to X} is \w{d^{i}_{0}} in simplicial dimension $i$.
\end{example}

\begin{example}\label{egspec}
Another variant is provided by a suitable category \w{\Sp} of spectra with
strictly associative smash product $\wedge$, such as the $S$-modules of
\cite{EKMMayR}, the symmetric spectra of \cite{HSSmiS}, and the orthogonal
spectra of \cite{MMSShipM}. One again has function spectra \w[,]{\maps(X,Y)}
which can be used to define \w{\uP{X}} and \w[.]{PX} The unit is the sphere
spectrum \w[.]{S\sp{0}}
\end{example}

\begin{example}\label{egchaincx}
For chain complexes of $R$-modules we have a monoidal structure with the
tensor product
\w[.]{(\bA\otimes\bB)\sb{n}\DEF\bigoplus\sb{i+j=n}\,A\sb{i}\otimes B\sb{j}}

Recall that the \emph{function complex} \w{\hHom(\bA,\bB)} is given by
\begin{myeq}
\label{eqfuncx}
\hHom(\bA,\bB)\sb{n}~\DEF~\prod\sb{i \in \bZ}\ \Hom(A\sb{i}, B\sb{i+n})~,
\end{myeq}
\noindent with \w{\partial\sb{n}((f\sb{i})\sb{i \in \bZ})\DEF
(\partial\sp{B}\sb{i+n} f\sb{i} - (-1)\sp{n}f\sb{i-1}
\partial\sp{A}\sb{i})\sb{i \in \bZ}} for
\w[.]{(f\sb{i}\colon A\sb{i}\to B\sb{i+n})\sb{i\in\bZ}}

Thus for \w{\eM=\ChR} we may set
\w[,]{\uP{X}\DEF\uHom(C\sb{\ast}(\Delta[1];R),X)} and see that \w{P\bA} has
\begin{myeq}\label{eqpathcc}
(PA)\sb{n}=A\sb{n}\oplus A\sb{n+1}\hsm\text{with }\hsm
\partial(a,a')=(\partial a,\partial a'+(-1)\sp{n+1}a)~,
\end{myeq}
\noindent and \w{\rh\sb{\bA}} the projection.
\end{example}

\begin{mysubsection}{Cores and elements}\label{sce}
In any monoidal path category \w{\lra{\eM,\otimes,1,\uPb{-}}}
and for any \w[,]{X\in\eM} we can think of \w{\Hom\sb{\eM}(1,X)} as the
`underlying set' of $X$, and think of a map \w{f:1\to X} in $\eM$ as an
`element' of $X$.

More generally, we may have a suitable monoidal subcategory $\cI$ of $\eM$, which
we call a \emph{core}, and define a \emph{generalized element} of $X$ to be
any map \w{f:\alpha\to X} in $\eM$ with \w[.]{\alpha\in\cI}
\end{mysubsection}

\begin{example}\label{egcore}
We may always choose \w{\cI=\{1\}} to consist of the unit of $\eM$ alone.
However, in some cases other natural choices are possible:

\begin{enumerate}
\renewcommand{\labelenumi}{(\alph{enumi})~}
\item In the three examples of \S \ref{egtop}, \S \ref{egsimpset}, and
\S \ref{egspec}, we can let \w{\cI\sb{S}\DEF\{S\sp{n}\}\sb{n=0}\sp{\infty}}
consist of all (non-negative dimensional) spheres \wh this is evidently closed
under \w[.]{\otimes=\wedge}
\item In the category of chain complexes over a ring $R$ (\S \ref{egchaincx}),
we let \w[,]{\cI\sb{R}\DEF\{\tM{R}{n}\}\sb{n\in\bZ}} where
\w{\tM{R}{n}} is the Moore chain complex with \w{\tMn{R}{n}{i}=R} for
\w[,]{i=n} and $0$ otherwise. Again we see that
\w[,]{\tM{R}{p}\otimes\tM{R}{q}=\tM{R}{p+q}} so \w{\cI\sb{R}} is indeed a monoidal
subcategory of \w[.]{(\ChR, \otimes\sb{R}, \tM{R}{0})}

We see that a generalized element in a chain complex \w{\bA} is now a
map \w{f:\tM{R}{n}\to\bA} in \w{\ChR} \wwh that is, an $n$-cycle in \w[.]{\bA}
\item Other examples are also possible \wh for example, if
\w{\cI':=\{\Mt{\bZ/p}{n}\}\sb{n=1}\sp{\infty}}
is the collection of mod $p$ Moore spaces, representing mod $p$ homotopy
groups (see \cite{NeiP}), then it is not itself a monoidal subcategory of
\w[,]{(\Topa,\wedge,S\sp{0})} since it is not closed under smash products.  However,
when $p$ is odd, the collection of finite wedges of such Moore spaces \emph{is}
monoidal, by \cite[Corollary 6.6]{NeiP}.
\end{enumerate}
\end{example}

%
%
\sect{Higher order chain complexes}
\label{chocc}

The structure defined in the previous section suffices to define higher order
chain complexes, as in \cite{BBlaC}:

\begin{mysubsection}{Categories enriched in monoidal path categories}
\label{scempc}
Let $\eC$ be a category enriched in a monoidal path category
\w[,]{\lra{\eM,\otimes,1,P}} so that for any \w{a,b\in\Obj\eC} we have a
\emph{mapping object} \w{\MapC(a,b)} in $\eM$, and for any \w{a,b,c\in\Obj\eC}
we have a \emph{composition map}
$$
\mu=\mu\sb{a,b,c}~:~\MapC(b,c)~\otimes~\MapC(a,b)~\longrightarrow~\MapC(a,c)
$$
\noindent (written in the usual order for a composite), satisfying the standard
associativity rules.

As in \S \ref{sce}, we can think of a morphism \w{f:1\to\MapC(a,b)} in $\eM$ as an
`element' of \w[,]{\MapC(a,b)} or simply a \emph{map} \w[.]{f:a\to b}
In particular, we have `identity maps' \w{\Id\sb{a}} in
\w{\MapC(a,a)} for each \w[,]{a\in\Obj\eC} satisfying the usual unit rules.

In addition, a morphism  \w{F:1\to P\MapC(a,b)} is called a \emph{nullhomotopy}
of \w[.]{f:=\rh\sb{\MapC(a,b)}\circ F} Higher order nullhomotopies are defined by
maps \w[.]{F:1\to P\sp{i}\MapC(a,b)}

The functoriality of $P$ implies that we can also compose
(higher order) nullhomotopies by means of the composite of
\begin{myeq}\label{eqcompnhtpies}
\begin{split}
P^{i}\MapC(b,c)\otimes P\sp{j}\MapC(a,b) &~\xra{\htij{i}{j}}
P\sp{i+j}[\MapC(b,c)\otimes \MapC(a,b)]\\
&\xra{P\sp{i+j}{\mu}} P\sp{i+j}{\MapC(a,c)}~,
\end{split}
\end{myeq}
\noindent which we denote by
\w[.]{\hmu{i,j}:P\sp{i}\MapC(b,c)\otimes P\sp{j}\MapC(a,b)\to P\sp{i+j}{\MapC(a,c)}}
Again, the maps \w{\hmu{(-,-)}} are associative.

For a general core \w{\cI\subseteq\eM} (cf.\ \S \ref{sce}), we have generalized
elements given by maps \w{f:\alpha\to\MapC(a,b)} for \w[.]{\alpha\in\cI}
We use the fact that $\cI$ is a monoidal subcategory to define the
composite of \w{f:\alpha\to\MapC(a,b)} with \w{g:\beta\to\MapC(b,c)}
\wb{\beta\in\cI} to be the composite in $\eM$ of
\begin{myeq}\label{eqcompgenm}
\beta\otimes\alpha~\xra{g\otimes f}~\MapC(b,c)\otimes\MapC(a,b)~\xra{\mu}~
\MapC(a,c)~,
\end{myeq}
\noindent and similarly for generalized (higher order) nullhomotopies.

From \wref{eqsimpmonoi} we see that:
\begin{myeq}\label{eqsimpcomp}
\di{k}\sp{n-1}\circ \hmu{i,j}=
\begin{cases}
\hmu{i-1,j}\circ(\di{k}\sp{i-1}\otimes\Id) & \text{if}\hsn 0\leq k<i\\
\hmu{i,j-1}\circ(\Id\otimes\di{k-i}\sp{j-1}) & \text{if}\hsn i\leq k<i+j
\end{cases}
\end{myeq}
\noindent for every \w[.]{0\leq k<i+j=n}
\end{mysubsection}

\begin{remark}\label{rpathcube}
If the path structure $P$ comes from a unpointed path structure \w{\uPb{-}} as
in \S \ref{spathcube}, a morphism \w{F:1\to \uP{\MapC(a,b)}} in $\eM$ is called a
\emph{homotopy} \w{F:f\sb{0}\sim f\sb{1}} between
\w{f\sb{0}\DEF\es{0}\sb{\MapC}\circ F} and
\w[.]{f\sb{1}\DEF\es{1}\sb{\MapC}\circ F}

Higher order homotopies are defined by maps \w[,]{F:1\to\uPn{i}\MapC(a,b)} and
the functoriality of \w{\uPb{-}} implies that we can compose (higher order)
homotopies by means of the composite of
$$
\xymatrix@R=15pt@C=15pt{
\uPn{i}{\MapC(b,c)}\otimes\uPn{j}{\MapC(a,b)} \ar[r]^{\thij{i}{j}} &
\uPn{i+j}{[\MapC(b,c)\otimes\MapC(a,b)]} \ar[r]^<<<<<{\uPn{i+j}{\mu}} &
\uPn{i+j}{\MapC(a,c)}~,
}
$$
\noindent which we denote by
\w[.]{\tmu{i,j}:\uPn{i}{\MapC(b,c)}\otimes\uPn{j}{\MapC(a,b)}\to
\uPnb{i+j}{\MapC(a,c)}}
These induce the maps \w[,]{\hmu{i,j}} as in \S \ref{spathcube}.
\end{remark}

\begin{defn}\label{dhocc}
Assume given a monoidal path category \w{\lra{\eM,\otimes,1,P}} with core
$\cI$ in $\eM$ (cf.\ \S \ref{sce}), and choose an ordered set
\w{\Gamma=(\gamma\sb{1},\dotsc,\gamma\sb{N})} of $N$ core elements.

An \emph{$n$-th order chain complex
\w{\dK=\lra{K,\{\{\Fud{k}{i}\}\sb{i=k+1}\sp{N}\}\sb{k=0}\sp{n}}} over $\eM$
(for $\Gamma$) of length \w{N\geq n+2}} consists of:
\begin{enumerate}
\renewcommand{\labelenumi}{(\alph{enumi})~}
\item A category $K$ enriched over $\eM$, with
\w{\Obj(K)=\{a\sb{0},\dotsc,a\sb{N}\}} and
\begin{myeq}\label{eqobjhchcx}
\MapK(a\sb{i},\,a\sb{j})~=~
\begin{cases}1 \amalg \ast& \text{if}\ i=j\\
\ast & \text{if}\ i<j~.
\end{cases}
\end{myeq}
\noindent $K$ will be called the \emph{underlying category} of the
$n$-th order chain complex $\dK$.
\item For each \w{0\leq k\leq n} and \w[,]{i=k+1,\dotsc N}
generalized elements
$$
\Fud{k}{i}:\gamma\sb{i-k}\otimes\dotsc\otimes\gamma\sb{i}\to
P\sp{k}\MapK(a\sb{i},\,a\sb{i-k-1})
$$
\noindent such that
\begin{myeq}\label{eqhocc}
\di{t}\circ\Fud{k}{i}~=~\hmu{k-t-1,t}(\Fud{k-t-1}{i-t-1}\otimes\Fud{t}{i})
\end{myeq}
\noindent for all \w[.]{0\leq t<k}
\end{enumerate}

When \w[,]{N=n+2} we simply call $\dK$ an \emph{$n$-th order chain complex}.
\end{defn}

\begin{remark}\label{rhocc}
Typically we are given a fixed category $\eC$ enriched in a monoidal path category
\w[,]{\lra{\eM,\otimes,1,P}} and the underlying category $K$ for a higher order
chain complex $\dK$ will simply be a finite subcategory of $\eC$ (usually not
full, because of condition \wref[).]{eqobjhchcx} Such a $\dK$ will be called an
\emph{$n$-th order chain complex in $\eC$}.
\end{remark}

\begin{defn}\label{dmaphocc}
Given an $n$-th order chain complex
\w{\dK=\lra{K,\{\{\Fud{k}{i}\}\sb{i=k+1}\sp{N}\}\sb{k=0}\sp{n}}} over $\eM$
(for $\Gamma$) of length $N$, and an enriched functor \w{\phi:K\to L} over
$\eM$ (which we may assume to be the identity on objects, with $L$ also satisfying
\wref[),]{eqobjhchcx} the \emph{induced} $n$-th order chain complex
\w{\dL=\lra{L,\{\{\Gud{k}{i}\}\sb{i=k+1}\sp{N}\}\sb{k=0}\sp{n}}} over $\eM$
(for the same $\Gamma$) is defined by setting
$$
\Gud{k}{i}:=\phi(\Fud{k}{i}):\gamma\sb{i-k}\otimes\dotsc\otimes\gamma\sb{i}\to
P\sp{k}\MapL(a\sb{i},\,a\sb{i-k-1})
$$
\noindent for all \w{0\leq k\leq n} and \w[.]{k<i\leq N}
\end{defn}

\begin{remark}
\noindent Note that we do \emph{not} assume that we have $n$-th order
nullhomotopies
\w{\Fud{n}{i}\in P\sp{n}\MapK(a\sb{i},\,a\sb{i-n-1})} (for \w[)]{i>n}
satisfying \wref[.]{eqhocc}

However, from \wref{eqhocc} and \wref{eqsimpcomp} we see that:
$$
\di{s}\circ\di{t}\circ\Fud{k}{i}~=~\hmu{k-t-2,t}
(\hmu{k-s-t-2,s}(\Fud{k-s-t-2}{i-s-t-2}\otimes\Fud{s}{i-t-1})\otimes\Fud{t}{i})
$$
\noindent if \w[,]{s+t<k-1} and
$$
\di{s}\circ\di{t}\circ\Fud{k}{i}~=~\hmu{k-t-1,t-1}(\Fud{k-t-1}{i-t-1}\otimes
\hmu{k-s-2,s+t-k+1}(\Fud{k-s-2}{i-s-t+k-2}\otimes\Fud{s+t-k+1}{i}))
$$
\noindent if \w[.]{k-1\leq s+t} Thus from the simplicial identity
\w{\di{s}\circ\di{t}=\di{t-1}\circ\di{s}} for \w{0\leq s<t} we deduce that
the maps \w{\{\Fud{k}{i}\}} must satisfy:
\begin{myeq}\label{eqassociativity}
\hmu{}(\Fud{r}{i-s-t-2}\otimes\Fud{s}{i-t-1}\otimes\Fud{t}{i})
\begin{cases}
\hmu{}(\Fud{r+1}{i-s-t-3}\otimes\Fud{t-1}{i-s-1}\otimes\Fud{s}{i})
& \text{if}\ s< t\\
\hmu{}(\Fud{t}{i-r-s-2}\otimes\Fud{s}{i-r-1}\otimes\Fud{r}{i})
& \text{if}\ s\geq r~\text{and}~ t=0\\
\hmu{}(\Fud{s+1}{i-r-t-3}\otimes\Fud{r}{i-t-2}\otimes\Fud{t-1}{i})
& \text{if}\ s\geq r~\text{and}~ t> 0~,
\end{cases}
\end{myeq}
\noindent where we have simplified the notation using the associativity
of $\mu$.
\end{remark}

\begin{mysubsection}{A cubical description}
\label{scdesc}
Higher order chain complexes were originally defined in \cite[\S 4]{BBlaC} in
terms of a cubical enrichment, which is well suited to describing higher
homotopies. In general, for an \wwb{n-1}st order chain complex
\begin{myeq}\label{eqnmochain}
a\sb{n+1}~\xra{\Fud{0}{n+1}}~a\sb{n}~\xra{\Fud{0}{n}}~
a\sb{n-2}~\to~ \dotsc~\to~a\sb{1}~\xra{\Fud{0}{1}}~a\sb{0}~,
\end{myeq}
\noindent we may describe the choices of higher homotopies \w{\Fud{k}{i}}
succinctly by arranging them as the collection of all the cubical faces in
the boundary of \w{I\sp{n+2}} containing a fixed vertex (which is indexed by
\w[).]{\Fud{0}{1}\otimes\Fud{0}{2}\otimes\dotsc\Fud{0}{n}\otimes\Fud{0}{n+1}}

The $k$-faces are indexed by
\begin{myeq}\label{eqkfaces}
\Fud{k\sb{1}}{i\sb{1}}\otimes\dotsc
\otimes\Fud{k\sb{r}}{i\sb{r}}~\in~
P\sp{k\sb{1}}\MapK(a\sb{i\sb{1}},\,a\sb{0})\otimes\dotsc\otimes
P\sp{k\sb{r}}\MapK(a\sb{n+1},\,a\sb{n-k\sb{r}})~,
\end{myeq}
\noindent with \w[,]{\sum\sb{j=1}\sp{r}\,k\sb{j}=k}
\w[,]{i\sb{j}=\sum\sb{t=1}\sp{j-1}\,(k\sb{t}+1)} and \w{r=n-k+1} (so
\w{i\sb{1}=k\sb{1}+1} and \w[).]{i\sb{r}=n+1}

By intersecting the corner of \w{\partial I\sp{n+2}}
with a transverse hyperplane in \w{\bR\sp{n+1}} we obtain an \wwb{n+1}simplex
$\sigma$, whose $n$-faces correspond to the \wwb{n+1}facets of the corner,
and so on.
More precisely, the cone on this simplex (with cone point the chosen vertex $v$ of
\w[)]{I\sp{n+2}} is homeomorphic to \w[,]{I\sp{n+2}} with each \wwb{n+1}face of
the cone obtained from an \wwb{n+1}facet $\tau$  of the corner by identifying the
$n$-corner opposite $v$ in $\tau$ to a single $n$-simplex in the base of the cone.
See Figure \ref{fig2}.

\setcounter{figure}{\value{thm}}\stepcounter{subsection}
\begin{figure}[htbp]
%
%
\begin{center}
\begin{picture}(140,130)(0,-10)
%
%
\put(0,0){\circle*{3}}
\multiput(1,0)(3,0){25}{\circle*{.5}}
\put(80,0){\circle*{3}}
\multiput(0,1)(0,3){25}{\circle*{.5}}
\put(0,80){\circle*{3}}
\put(0,80){\line(1,0){80}}
\put(79,79){\circle*{5}}
\put(80,78){\line(0,-1){78}}
%
%
\multiput(0,80)(3,2){20}{\circle*{.5}}
\put(60,120){\circle*{3}}
\put(140,120){\circle*{3}}
\put(80,80){\line(3,2){60}}
\multiput(60,120)(3,0){26}{\circle*{.5}}
%
%
\multiput(80,0)(3,2){20}{\circle*{.5}}
\put(140,40){\circle*{3}}
\multiput(140,40)(0,3){27}{\circle*{.5}}
\thicklines
\put(40,80){\line(1,-1){40}}
\bezier{300}(40,80)(75,90)(110,100)
\put(80,40){\line(1,2){30}}
\end{picture}
\end{center}
\caption{\label{fig2}Corner of $3$-cube and transverse $2$-simplex}
\end{figure}
\setcounter{thm}{\value{figure}}

This explains why the maps \w{\di{i}\sp{n}:P\sp{n+1}X\to P\sp{n}X}
of \S \ref{dsqmc}, which relate the various $\otimes$-composites appearing as
facets of \w[,]{\partial I\sp{n+1}} satisfy simplicial, rather than cubical,
identities.
\end{mysubsection}

\begin{example}\label{egcube}
Consider a second order chain \vsm complex

\mydiagram[\label{eqtodathree}]{
&&& \ast &&&&&\\
&& \ast & &&& \ast &&\\
a \ar[rr]^{k} \ar@/^{2.8pc}/[rrrr] \ar@/^{6.7pc}/|(.89){\hole}[rrrrrr] &&
b \ar[rr]^{h} \ar@{=>}[u]_{h\circ k} \ar@/_{7.0pc}/[rrrrrr]
\ar@/_{2.8pc}/[rrrr] &&
c \ar@{=>}[uu]^{g\circ h\circ k} \ar[rr]^{g} \ar@/^{2.8pc}/[rrrr]
\ar@{=>}[d]_{g\circ h} &&
d \ar@{=>}[dd]_{f\circ g\circ h} \ar@{=>}[u]_{f\circ g} \ar[rr]^{f} && e\\
 &&&& \ast &&&&\\
 &&&&& \vsp\ast &&&
}
\vsn\quad

\noindent in \w[,]{\Topa} say, in which we have \w{n+1=4} composable maps: \
\w[,]{\Fud{0}{1}=f} \w[,]{\Fud{0}{2}=g} and so on, with all adjacent composites
nullhomotopic.

In this case we may choose nullhomotopies as indicated, namely:
\w{\Fud{1}{2}=f\circ g} in \w{\mapa(c,e)\sp{I\sp{1}}} (with
\w{\es{0}(f\circ g)=\ast} and \w[),]{\es{1}(f\circ g)=fg}
\w{\Fud{1}{3}=g\circ h} in \w[,]{\mapa(b,d)\sp{I\sp{1}}}
and \w{\Fud{1}{4}=h\circ k} in \w{\mapa(a,c)\sp{I\sp{1}}} \wwh
so that in fact \w{f\circ g} is in the pointed path space \w[.]{P\mapa(c,e)}
Similarly, \w{\Fud{2}{4}=f\circ g\circ h} is a homotopy of nullhomotopies
between \w{h\sp{\ast}(f\circ g)} and \w[.]{f\sp{\ast}(g\circ h)}

The more suggestive notation \w[,]{f\circ g} and so on, is motivated by
the cubical Boardman-Vogt $W$-construction of \cite[\S 3]{BVogHI}, as explained in
\cite[\S 5]{BBlaC}: we think a $k$-th order homotopy as a $k$-cube in the
appropriate mapping spaces.

If we apply the usual composition map
$$
\mu:~\mapa(c,d)~\otimes~\mapa(a,c)\sp{I\sp{1}}~\to~\mapa(a,d)\sp{I\sp{1}}
$$
\noindent to \w[,]{g\otimes h\circ k} we obtain a nullhomotopy of \w[,]{ghk}
and similarly for \w{g\circ h\otimes k} in
\w[.]{\mapa(b,d)\sp{I\sp{1}}~\otimes~\mapa(a,b)} Thus we may ask if these two
nullhomotopies are themselves homotopic (relative to \w[):]{ghk} if so, we have
a $2$-cube \w{g\circ h\circ k} in \w[,]{\mapa(a,d)\sp{I\sp{2}}} which in fact lies
in \w[.]{P\sp{2}\mapa(a,d)} The ``formal'' post-composition with
\w{f\in\mapa(d,e)} yields \w{f\otimes g\circ h\circ k} in
\w[.]{\mapa(d,e)\otimes P\sp{2}\mapa(a,d)} Together with the other two formal
composites \w{f\circ g\circ h\otimes k} in \w{P\sp{2}\mapa(b,e)\otimes\mapa(a,b)}
and \w{f\circ g\otimes h\circ k} in \w[,]{P\mapa(c,e)\otimes P\mapa(a,c)} it
fits into the corner of the $3$-cube described in Figure \ref{fig1} (where we use
both notations \w[,]{\Fud{2}{2}=f\circ g} and so on,
to label facets).

\setcounter{figure}{\value{thm}}\stepcounter{subsection}
\begin{figure}[htbp]
%
%
\begin{center}
\begin{picture}(380,180)(-30,0)
%
%
\put(40,75){\circle*{3}}
\multiput(40,74)(0,-3){23}{\circle*{.5}}
\put(-12,65){\scriptsize{$\Fud{0}{1}\otimes\Fud{0}{2}\otimes \Fud{1}{4}
=f\otimes g\otimes h\circ k$}}
\put(140,75){\circle*{5}}
\put(140,75){\line(-1,0){100}}
\put(143,67){\scriptsize{$f\otimes g\otimes h\otimes k$}}
\put(140,70){\line(0,-1){60}}
\put(154,40){\scriptsize{$f\otimes g\circ h\otimes k\hspace*{3mm}
=\Fud{0}{1}\otimes\Fud{1}{3}\otimes \Fud{0}{4}$}}
\put(154,36){\vector(-2,-1){13}}
\put(140,5){\circle*{3}}
\multiput(42,5)(3,0){33}{\circle*{.5}}
\put(40,5){\circle*{3}}
\put(65,40){\framebox{{\scriptsize{$f\otimes g\circ h\circ k$}}}}
\put(92,29){\scriptsize{$=$}}
\put(93,20){\oval(46,17)}
\put(72,17){\scriptsize{$\Fud{0}{1}\otimes \Fud{2}{4}$}}
%
%
\put(225,169){\circle*{3}}
\multiput(225,169)(-2,-1){93}{\circle*{.5}}
\put(325,169){\circle*{3}}
\multiput(325,170)(-3,0){34}{\circle*{.5}}
\put(325,169){\line(-2,-1){183}}
\put(220,153){\scriptsize{$\Fud{1}{2}\otimes\Fud{0}{3}\otimes\Fud{0}{4}$}}
\put(200,139){\scriptsize{$=f\circ g\otimes h\otimes k$}}
\put(229,137){\vector(1,-1){10}}
\put(140,116){{\framebox{\scriptsize{$f\circ g\otimes h\circ k$}}}}
\put(132,106){\scriptsize{$=$}}
\put(125,96){\oval(48,17)}
\put(105,93){\scriptsize{$\Fud{1}{2}\otimes \Fud{1}{4}$}}
%
%
\multiput(325,170)(0,-3){25}{\circle*{.5}}
\put(325,99){\circle*{3}}
\multiput(325,99)(-2,-1){93}{\circle*{.5}}
\put(245,107){\framebox{{\scriptsize{$f\circ g\circ h\otimes k$}}}}
\put(245,95){\scriptsize{$=$}}
\put(235,84){\oval(48,17)}
\put(215,82){\scriptsize{$\Fud{2}{3}\otimes \Fud{0}{4}$}}
\end{picture}
\end{center}
\caption{\label{fig1}The cubical corner}
\end{figure}
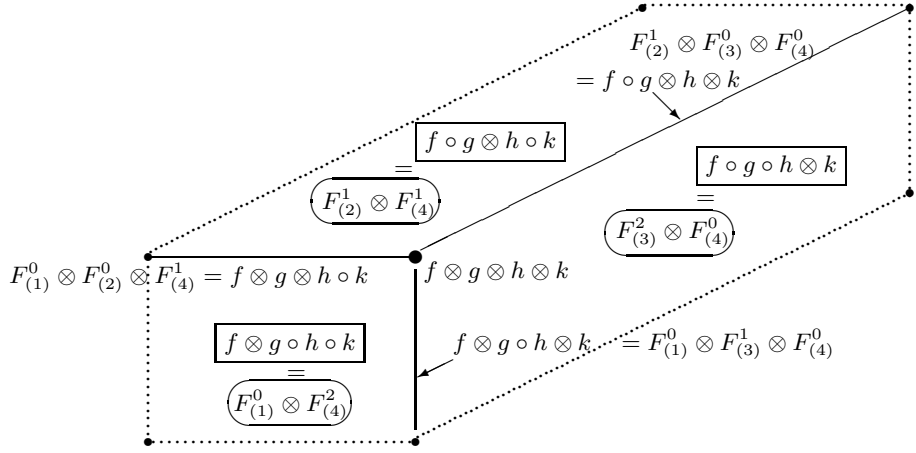
\setcounter{thm}{\value{figure}}

All vertices but the central one represent the zero map, and the dotted edges
represent the trivial nullhomotopy of the zero map (and similarly for the invisible
facets of the cube, representing the trivial second-order homotopy of the trivial
nullhomotopy).
\end{example}

\begin{remark}\label{rcubepath}
The cubical formalism may be used to describe the iterated path complex
\w{P\sp{n}\bA} in the category of chain complexes (see \S \ref{egchaincx}):

We may use the conventions of \S \ref{scdesc} to identify
the $k$-faces of the corner of an $n$-cube \w{I\sp{n}}
(adjacent to a fixed vertex $v$), for \w[,]{0\leq k\leq n} with the
\wwb{k-1}dimensional faces \w{\sud{k}{i}} of the standard
\wwb{n-1}simplex \w{\Delta[n-1]} for \w{0\leq i\leq\binom{n}{k}-1}
(see Figure \ref{fig2}).
Thus \w{I\sp{n}} itself is labelled \w{\sud{n}{0}} (corresponding to
\w[),]{\Delta[n-1]} with the $n$ \wwb{n-1}facets of \w{I\sp{n}} adjacent to $v$
labelled \w[,]{\sud{n-1}{0}=d\sb{0}\sud{n}{0}}
\w[,]{\sud{n-1}{1}=d\sb{1}\sud{n}{0}} and so on. The vertex $v$ is
labelled \w{\sud{0}{0}} (not corresponding to any real face of
\w[).]{\Delta[n-1]}

Then
\begin{myeq}\label{eqnpath}
(P\sp{n}A)\sb{j}~=~\bigoplus\sb{0\leq k\leq n}\
\bigoplus\sb{0\leq i<\binom{n}{k}}\ A\sb{j+k}\upb{\sud{k}{i}}~,
\end{myeq}
\noindent with the differential
\w{\partial\sp{P\sp{n}A}\colon(P\sp{n}A)\sb{j}\to(P\sp{n}A)\sb{j-1}} sending
\w{a\in A\sb{j+k}\upb{\sud{k}{i}}} to \w{\partial\sp{A}(a)} in the summand
\w{A\sb{j+k-1}\upb{\sud{k}{i}}} of \w[,]{(P\sp{n}A)\sb{j-1}} and to
\w{(-1)\sp{n+k+t}a} in the summand \w[.]{A\sb{j+k-1}\upb{d\sb{t}\sud{k}{i}}}

The structure maps \w{\di{i}\sp{n}:P\sp{n}\bA\to P\sp{n-1}\bA} are
given by the projections onto the summands labelled by the $i$-th simplicial
facet of \w{\Delta[n]} and its simplicial faces, for \w[.]{0\leq i\leq n-1}
\end{remark}

\begin{example}\label{egdcubepath}
The double path complex \w{P\sp{2}\bA} is given by
\begin{myeq}\label{eqdpathcc}
(P\sp{2}A)\sb{j}=A\sb{j}\oplus A\sb{j+1}\oplus A\sb{j+1}\oplus A\sb{j+2}~,
\end{myeq}
\noindent with
\begin{myeq}\label{eqdpathccbou}
\partial(x,a,a',y)=(\partial x,\partial a+(-1)\sp{j+1}x,
\partial a'+(-1)\sp{j+1}x,\partial y+(-1)\sp{j}(a-a'))~.
\end{myeq}
\end{example}

\begin{example}\label{egtcubepath}
\noindent Similarly, \w{(P\sp{3}A)\sb{j}} is given by
$$
A\sb{j}\upb{\sud{0}{0}} \oplus A\sb{j+1}\upb{\sud{1}{0}}
\oplus A\sb{j+1}\upb{\sud{1}{1}} \oplus A\sb{j+1}\upb{\sud{1}{2}}
\oplus A\sb{j+2}\upb{\sud{2}{0}} \oplus A\sb{j+2}\upb{\sud{2}{1}}
\oplus A\sb{j+2}\upb{\sud{1}{2}}\oplus A\sb{j+3}\upb{\sud{3}{0}}
$$
\noindent and
\begin{equation*}
\begin{split}
\partial&(a,b\sb{0},b\sb{1},b\sb{2},c\sb{0},c\sb{1},c\sb{2},d)=
(\partial a,\partial b\sb{0}-\tau x,\partial b\sb{1}-\tau x,
\partial b\sb{2}-\tau x,\\
&\partial c\sb{0}+\tau(b\sb{1}-b\sb{0}),
\partial c\sb{1}+\tau(b\sb{2}-b\sb{0}),\partial c\sb{2}+\tau(b\sb{2}-b\sb{1}),
\partial d-\tau(c\sb{2}-c\sb{1}+c\sb{1}))
\end{split}
\end{equation*}
\noindent for \w[.]{\tau=(-1)\sp{j}}
\end{example}

%
%
\sect{Higher Toda brackets}
\label{chtb}

We now show how one may define the higher Toda bracket corresponding to a higher
order chain complex.  First, we need to define the object housing it:

\begin{defn}\label{dloop}
In any monoidal path category \w{\lra{\eM,\otimes,1,\uPb{-}}}
we define the (modified) \emph{$n$-fold loop functor}
\w{\lop{n}:\eM\to\eM} to be the limit:
\begin{myeq}\label{eqloop}
\lop{n}X~:=~\lim\sb{1\leq k\leq n}\ P\sp{k}X
\end{myeq}
\noindent where the limit is taken all the natural maps
\w{\di{i}\sp{k}:P\sp{k}X\to P\sp{k-1}X} of \S \ref{dsqmc}. By \S \ref{scdesc},
we may think of this as a diagram indexed by the dual of the standard
$n$-simplex.

The simplicial identities \wref{eqsimpids} imply that there is a natural
map
\begin{myeq}\label{eqloopmap}
\wrh{n}{X}~:~P\sp{n+1}X~\to~\lop{n}X~,
\end{myeq}
\noindent which composes with the structure maps \w{\pi\sb{t}:\lop{n}X\to P\sp{n}X}
for the limit to yield the face maps \w{\di{t}:P\sp{n+1}X\to P\sp{n}X}
\wb[,]{i=0,\dotsc,n} since \w{\lop{n}X} is the $n$-th matching object for the
restricted augmented simplicial object \w{P\sp{\bullet}X}
(cf.\ \cite[\S 16.3.7]{PHirM}).

For \w{n=0} we set \w[.]{\lop{0}X:=X}
\end{defn}

\begin{example}\label{egloop}
By \S \ref{scdesc}, we may think of \wref{eqloop} as the limit of a diagram
indexed by the dual of the standard $n$-simplex.  Thus \w{\lop{1}X} is the
pullback in:
\mydiagram[\label{eqlopo}]{
\ar @{} [drr] |<<<<<{\framebox{\scriptsize{PB}}}
\lop{1}X \ar[rr] \ar[d] && PX \ar[d]^{\rh\sb{X}} \\
PX \ar[rr]^{\rh\sb{X}} && X,
}
\noindent indexed by the inclusion of the two vertices into \w[,]{\Delta[1]}
while \w{\lop{2}X} is the limit of the diagram:
\begin{myeq}\label{eqlopt}
\vcenter{\begin{xy}
 (0,0)*+{X}="0";
  (0,20)*+{PX}="11";
  (-40,20)*+{PX}="10";
  (40,20)*+{PX}="12";
  (0,40)*+{P\sp{2}X}="21";
  (-40,40)*+{P\sp{2}X}="20";
  (40,40)*+{P\sp{2}X}="22";
  {\ar_{\di{0}\sp{1}=\rh\sb{X}}"10";"0"};
  {\ar^{\di{0}\sp{1}=\rh\sb{X}}"11";"0"};
  {\ar^{\di{0}\sp{1}=\rh\sb{X}}"12";"0"};
  {\ar_{\di{0}\sp{2}}"20";"10"};
  {\ar^(.7){\di{1}\sp{2}}"20";"11"};
  {\ar^{\di{0}\sp{2}}"22";"12"};
  {\ar_(.7){\di{1}\sp{2}}"22";"11"};
  {\ar|(.5){\hole}^(.65){\di{0}\sp{2}}"21";"10"};
  {\ar|(.5){\hole}_(.7){\di{1}\sp{2}}"21";"12"};
\end{xy}}
\end{myeq}
\end{example}

\begin{defn}\label{dhtb}
Let $\dK$ be an \wwb{n-1}st order chain complex (of length \w[)]{n+1}
enriched in a monoidal path category
\w{\lra{\eM,\otimes,1,\uPb{-}}} (for a set
\w{\Gamma=(\gamma\sb{1},\dotsc,\gamma\sb{n+1})} of core elements), as in
\S \ref{dhocc}. If we apply the iterated composition map to each $k$-face of
the form \wref[,]{eqkfaces} we obtain an `element'
\begin{myeq}\label{eqkfacecomp}
\hmu{}(\Fud{k\sb{1}}{i\sb{1}}\otimes\dotsc
\otimes\Fud{k\sb{r}}{i\sb{r}})~:~\gamma\sb{1}\otimes\dotsc\otimes\gamma\sb{n+1}~
\to~P\sp{k}\MapK(a\sb{n+1},\,a\sb{0})
\end{myeq}
\noindent (using the associativity of $\hmu{}$),

From \wref{eqhocc} and \wref{eqsimpcomp} we see that these elements
\wref{eqkfacecomp} are compatible under the face maps
\w[,]{\di{t}:P\sp{k}\MapK(a\sb{n+1},\,a\sb{0})\to
P\sp{k-1}\MapK(a\sb{n+1},\,a\sb{0})}
so that they fit together to define an element
\begin{myeq}\label{eqvalue}
\lra{\dK}~:~\gamma\sb{1}\otimes\dotsc\otimes\gamma\sb{n+1}~\to~
\lop{n-1}\MapK(a\sb{n+1},\,a\sb{0})
\end{myeq}
\noindent which we call the \emph{value of the $n$-th order Toda bracket}
associated to the chain complex $\dK$.

If \w{\lra{\dK}} lifts along the map
\w{\wrh{n-1}{X}:P\sp{n}X\to\lop{n-1}X} of \wref[,]{eqloopmap}
we say that this value of the Toda bracket \emph{vanishes}.
\end{defn}

\begin{remark}\label{rhtb}
Given an \wwb{n-1}st order chain complex
\w{\dK=\lra{K,\{\{\Fud{k}{i}\}\sb{i=k+1}\sp{n+1}\}\sb{k=0}\sp{n-1}}} over $\eM$
(for $\Gamma$), any enriched functor \w{\phi:K\to L} over $\eM$ as in
\S \ref{dmaphocc} takes \w{\lra{\dK}} to
$$
\lra{\dL}~:~\gamma\sb{1}\otimes\dotsc\otimes\gamma\sb{n+1}~\to~
\lop{n-1}\MapL(a\sb{n+1},\,a\sb{0})
$$
\noindent where $\dL$ is the \wwb{n-1}st order chain complex induced by $\phi$,
by functoriality of the limits in $\eM$.
\end{remark}

\begin{mysubsection}{Massey products}
\label{smassey}
Massey products (and their higher order versions) also fit into our setting,
although they cannot be defined as ordinary Toda brackets in a model category.
This is because a (unital associative) differential graded algebra \w{\bA}
over a commutative ground ring $R$ can be thought of as a category $\eC$ with a
single object $\xi$ enriched in \w[,]{(\ChR, \otimes\sb{R}, \tM{R}{0})}
with \w[.]{\Hom_{\eC}(\xi,\xi):=\bA}

In this context we choose the core  of \w{\ChR} to be \w{\cI\sb{R}} as in
\S \ref{egcore}(b). Thus an \wwb{n-1}st order chain complex in \w{\bA} consists of:
\begin{enumerate}
\renewcommand{\labelenumi}{(\alph{enumi})~}
\item The sequence of objects \wh necessarily \w{a\sb{i}=\xi} for all $i$.
\item A sequence of generalized maps
\w{\Fud{0}{i}:\tM{R}{m\sb{i}}\to\Hom_{\eC}(\xi,\xi)}
for \w[,]{i=1,\dotsc,n+1} which may be identified
with an \ww{m\sb{i}}-cycle \w{H\sp{0}\sb{i}\in Z\sb{m\sb{i}}\bA}
(see \S \ref{egcore}(b)).
\item A sequence of generalized nullhomotopies \w{\Fud{1}{i}\in P\bA}
\wb[,]{i=2,\dotsc,n+1} with
\w[.]{\rh\sb{\bA}(\Fud{1}{i})=\mu(f\sb{i-1}\otimes f\sb{i})} From the
description in \S \ref{egchaincx} we see that \w{\Fud{1}{i}} is completely
determined by an element \w{H\sp{1}\sb{i}\in A\sb{m\sb{i}+m\sb{i-1}+1}} with
\w{d(H\sp{1}\sb{i})=H\sp{0}\sb{i-1}\cdot H\sp{0}\sb{i}} (where $d$ is the
differential and $\cdot$ is the multiplication in \w[).]{\bA}
\item From \S \ref{egdcubepath}  we see that a `second-order nullhomotopy'
\w{\Fud{2}{i}\in P\sp{2}\bA} \wb[,]{i=3,\dotsc,n+1} which is
a \wwb{j+2}cycle for \w[,]{j\DEF m\sb{i}+m\sb{i-1}+m\sb{i-2}} is determined
uniquely by the element \w{H\sp{2}\sb{i}\in A\sb{j}} (the last summand in
\wref[).]{eqdpathcc} From the last term in \wref{eqdpathccbou} we see that
\w{\Fud{2}{i}} being a cycle means that
$$
d(H\sp{2}\sb{i})~=~(-1)\sp{j+1}~
(H\sp{0}\sb{i-2}\cdot H\sp{1}\sb{i}-H\sp{1}\sb{i-1}\cdot H\sp{0}\sb{i})~.
$$
\item In general, for each \w{1\leq k<n} and \w[,]{i=k+1,\dotsc n+1}
we have a (generalized) \w{\Fud{k}{i}\in P\sp{k}\bA} which is a
\wwb{j+k}cycle for \w[,]{j\DEF \sum\sb{t=i-k}\sp{i}\,m\sb{t}} with
\begin{myeq}\label{eqhoccp}
\di{t}\circ\Fud{k}{i}~=~\Fud{k-t-1}{i-t-1}\cdot\Fud{t}{i}~,
\end{myeq}
\noindent and from the description in \S \ref{rcubepath} we see that again
\w{\Fud{k}{i}} is completely determined by the component \w{H\sp{k}\sb{i}}
in the summand \w[,]{A\sb{j+k}\upb{\sud{k}{i}}}
with
$$
d(H\sp{k}\sb{i})~=~(-1)\sp{k+j+1}\,\sum\sb{t=0}\sp{k-1}\
(-1)\sp{t}H\sp{t}\sb{i-k+t}\cdot H\sp{k-t-1}\sb{i}~.
$$
\end{enumerate}

Thus by Definition \ref{dhtb} we see that the value of the
\wwb{n+1}st order Toda bracket associated to this \wwb{n-1}st order chain complex
in \w{\bA} is the element in
\w{\lop{n-1}\bA=\lim\sb{1\leq k<n}\ P\sp{k}\bA} determined by the
coherent choice of elements
\begin{myeq}\label{eqcohelts}
H\sp{t}\sb{i-k+t}\cdot H\sp{n-t}\sb{i}~\in A\sb{j+n}
\hsp \text{for}\hsm t=1,\dotsc, n~,
\end{myeq}
\noindent where \w[.]{j\DEF \sum\sb{t=1}\sp{n}\,m\sb{t}}
\end{mysubsection}

%
%
\sect{Higher Toda brackets in model categories}
\label{chtbmc}

In order to \emph{define} the values of higher Toda brackets, all we need is
a category enriched in a monoidal path category $\eM$. However, in applications
we want to use such Toda brackets, either as obstructions to rectifying
diagrams, or as invariants used in computations (e.g., of differentials in
spectral sequence). For this we need to make an additional

\begin{defn}\label{dsmmc}
A \emph{path model category} is a pointed monoidal model category
\w{\lra{\eM,\otimes,1}} in the sense of \cite[Ch.\ 4]{HovM} which satisfies
the conditions of either of \cite[Theorem 1.9,Theorem 1.10]{BMoerH}, and
which is also a simplicial model category as in \cite[II, \S 2]{QuiH},
equipped with a core $\cI$ (cf.\ \S \ref{sce}) consisting of cofibrant objects,
and a natural transformation
\begin{myeq}\label{eqzeta}
\zeta\sb{X,Y,K}:X\sp{K}\otimes Y\sp{K}\to (X\otimes Y)\sp{K}
\end{myeq}
\noindent (natural in \w{X,Y\in\eM} and \w[).]{K\in\eS}
\end{defn}

\begin{remark}\label{rsmmc}
By \cite[Proposition 4.2.19]{HovM}, a path model category actually has a
\ww{\Sa}-model category structure \wh that is, we have functors
\w{(-)\sp{K}:\eM\to\eM} and \w{(-)\otimes K:\eM\to\eM} for every \emph{pointed}
simplicial set \w[,]{K\in\Sa} satisfying the usual axioms.
\end{remark}

\begin{examples}\label{egsmmc}
In practice we shall be interested only in the following examples:
\begin{enumerate}
\renewcommand{\labelenumi}{(\alph{enumi})~}
\item The monoidal structure on \w{\Top} is cartesian, so we actually have
a natural homeomorphism
\w[.]{\tilde{\zeta}:X\sp{K}\times Y\sp{K}\xra{\cong}(X\times Y)\sp{K}}
It is readily verified that in the pointed version \w{\lra{\Topa,\wedge,S^{0}}}
of \S \ref{egtop}, the map $\tilde{\zeta}$ induces
\w[.]{\zeta:X\sp{K}\wedge Y\sp{K}\to(X\wedge Y)\sp{K}}
\item The monoidal structure on $\eS$ is also cartesian, so in
the pointed version \w{\lra{\Sa,\wedge,S^{0}}} of \S \ref{egsimpset}
we also have an induced map as in \wref[.]{eqzeta}
\item If we use symmetric spectra as our model for \w{\Sp} (cf.\ \S \ref{egspec})
we see that the spectrum \w{X\sp{K}} is defined levelwise, so we have \wref{eqzeta}
as for \w[.]{\Topa}
\item In the category \w{\lra{\ChR,\otimes,\tM{R}{0}}} of chain complexes of
$R$-modules (\S \ref{egchaincx}), the monoidal structure is not cartesian,
but the simplicial structure is defined by setting
\w{\bA\sp{K}:=\hHom(C\sb{\ast}K,\bA)} (where \w{C\sb{\ast}K} is the simplicial
chain complex of \w[).]{K\in\eS} The natural transformation \wref{eqzeta} is
induced by the diagonal \w{\Delta:K\to K\times K} in $\eS$.
\end{enumerate}
Note that all of these satisfy the hypotheses of one of
\cite[Theorem 1.9,Theorem 1.10]{BMoerH}, by \cite[\S 1.8]{BMoerH}
and \cite[Proposition A.3.2.4-A.3.2.24]{LuriH}, so they are in fact path
model categories.
\end{examples}

\begin{remark}\label{rloop}
In this case the simplicial structure defines the functor
\w[,]{\uPb{-}:\eM\to\eM} with \w{\uP{X}\DEF X\sp{\Delta[1]}}
(cf.\ \cite[II, \S 1]{QuiH}), and \w{PX\hra X\sp{\Delta[1]}} is defined by the
pullback \wref[.]{eqpath} We can therefore identify \w{P\sp{k}X} for each
\w{k\geq 0} with the subobject of \w{X\sp{[0,1]\sp{k}}} consisting of all maps
of the $k$-cube sending the corner opposite a fixed vertex to the basepoint
(see Figure \ref{fig1}).

Thus \w{\lop{n}X} is a subobject of \w[,]{\lim\sb{k}\ \mapa([0,1]\sp{k},X)}
which by adjunction may be identified with
\w[.]{X\sp{\colim\sb{k}\,[0,1]\sp{k}}} Thus \w{\lop{n}X}
itself is just \w[,]{\mapa(\colim\,\widetilde{[0,1]\sp{k}},\,X)} where the
colimit is now taken over all proper faces of \w[,]{[0,1]\sp{n+1}} and we identify
the corner opposite our chosen vertex of \w{[0,1]\sp{n+1}} to a point.
This colimit is homeomorphic to an $n$-sphere, so \w{\lop{n}X} is homotopy
equivalent to the $n$-fold loop space \w[,]{\Omega\sp{n}X} defined as usual
by iterating the functor \w{\Omega:\eM\to\eM} given by the pullback
\mydiagram[\label{eqoldloop}]{
\ar @{} [drr] |<<<<<{\framebox{\scriptsize{PB}}}
\Omega X \ar[rr] \ar[d] && PX \ar[d]^{\rh\sb{X}} \\
\ast \ar[rr] && X.
}
\end{remark}

\begin{remark}\label{rhtpycat}
In any path model category $\eM$, for any fibrant object $X$ we
have an equivalence relation $\sim$ on the set of morphisms \w{\Hom\sb{\eM}(1,X)}
(cf.\ \S \ref{sce}), given by:
$$
f\sim g\hsm\EQUIV\hsm \exists F:1\to \uP{X}\hsm \text{such that}\hsm
\es{0}\circ F=f\hsm \text{and}\hsm \es{1}\circ F=g~.
$$
\noindent We then define the (pointed) \emph{set of components} \w{\pi\sb{0}X}
to be the set of equivalence classes in \w{\Hom\sb{\eM}(1,X)} under $\sim$.

Now let $\eC$ be a category enriched in $\eM$, and assume the mapping objects
\w{\MapC(a,b)} are fibrant (e.g., if all objects in $\eM$ are fibrant,
as in \w[),]{\Topa} If we denote \w{\pi\sb{0}\MapC(a,b)} simply by
\w[,]{\pih{a}{b}} from \S \ref{scempc} we see that $\mu$ induces an associative
composition on \w[,]{[-,-]} so that this serves as the set of morphisms in the
\emph{homotopy category} \w{\ho\eC} of the $\eM$-enriched category $\eC$
(with the same objects as $\eC$).
\end{remark}

\begin{defn}\label{dcorehtpy}
More generally, if $\cI$ is the core of a path model category $\eM$, for any
core element $\gamma$ (which is cofibrant by Definition \ref{dsmmc}) the
simplicial enrichment \w{\mapM} in $\eM$ allows us to identify \w{[\gamma,X]}
with \w{\pi\sb{0}\mapM(\gamma,X)} (see \cite[II, 2.6]{QuiH}).

Thus if $\eC$ is enriched in $\eM$, we may set
$$
[a,b]\sb{\gamma}~:=~\pi\sb{0}\mapM(\gamma,\MapC(a,b))~.
$$
\noindent for any \w{a,b\in\eC} and \w[.]{\gamma\in\cI}

Note that for any \w{\gamma,\delta\in\cI} and \w[,]{i\geq 0} the
bifunctor $\otimes$, the map \w{\zeta\sb{X,Y,\Delta[i]}} of \wref{eqzeta}
for \w{X:=\MapC(b,c)} and \w[,]{Y:=\MapC(a,b)} and the composition
\w{\mu:X\otimes Y\to Z} (for \w[)]{Z:=\MapC(a,c)} induce natural maps of sets
\begin{equation*}
\begin{split}
(\mapM(\gamma,X)\times\mapM(\delta,Y))\sb{i}~=~&
\Hom\sb{\eM}(\gamma,X\sp{\Delta[i]})\times\Hom\sb{\eM}(\delta,Y\sp{\Delta[i]})~
\xra{\otimes\sb{\ast}}\\
\Hom\sb{\eM}(\gamma\otimes\delta,X\sp{\Delta[i]}\otimes Y\sp{\Delta[i]})&~
\xra{\zeta}~\Hom\sb{\eM}(\gamma\otimes\delta,(X\otimes Y)\sp{\Delta[i]})~
\xra{(\mu\sp{\Delta[i]})\sb{\ast}}\\
&\Hom\sb{\eM}(\gamma\otimes\delta,Z\sp{\Delta[i]})~=~
(\mapM(\gamma\otimes\delta,Z))\sb{i}
\end{split}
\end{equation*}
\noindent and thus a composition map
\w{\nu:\mapM(\gamma,X)\times\mapM(\delta,Y)\to\mapM(\gamma\otimes\delta,Z)}
in $\eS$.  Thus induces an associative composition map
\begin{myeq}\label{eqnu}
\nu\sb{\ast}:[b,c]\sb{\gamma}~\times~[a,b]\sb{\delta}~\to~
[a,c]\sb{\gamma\otimes\delta}~.
\end{myeq}
Thus we have an $\cI$-graded category denoted by \w[,]{\ho\sp{\cI}\eC} called the
\emph{$\cI$-homotopy category} of $\eC$.
\end{defn}

\begin{defn}\label{ddke}
Assume given a path model category $\eM$ with core $\cI$.
We say that a category $K$ enriched in $\eM$ is \emph{fibrant} if
\w{\MapK(a,b)} is fibrant in $\eM$ for any \w[.]{a,b\in K} Note that since each
\w{\gamma\in\cI} is cofibrant, this implies that \w{\mapM(\gamma,\MapK(a,b))} is
a fibrant simplicial set, by SM7.

An enriched functor \w{\phi:K\to L} between categories $K$ and $L$ enriched in
$\eM$ is a \emph{Dwyer-Kan equivalence} if
\begin{enumerate}
\renewcommand{\labelenumi}{(\alph{enumi})~}
\item For all \w[,]{a,b\in\eC} \w{\phi:\MapK(a,b)\to\MapL(\phi(a),\phi(b))}
is a weak equivalence in $\eM$.
\item The induced functor \w{\phi\sb{\ast}:\ho\sp{\cI}K\to\ho\sp{\cI}L} is an
equivalence of $\cI$-graded categories.
\end{enumerate}
\noindent See \cite{SShipE}, and compare \cite{BMoerH}.

We say that such a Dwyer-Kan equivalence is a \emph{trivial fibration} if each
\w{\phi:\MapK(a,b)\to\MapL(\phi(a),\phi(b))} is a fibration in $\eM$.
\end{defn}

By Definition \ref {dsmmc} and \cite[Theorem 1.9-1.10]{BMoerH} we have:

\begin{thm}\label{tcanmc}
There is a canonical model category structure on the category
\w{\MCat} of small categories enriched in any path model category $\eM$, in which
the trivial fibrations and fibrant categories are defined object-wise, and
the weak equivalences are the Dwyer-Kan equivalences.
\end{thm}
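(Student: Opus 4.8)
The plan is to deduce this from the work of Berger--Moerdijk (and Lurie) by transporting the model structure on $\mathbf{M}\text{-Cat}$ for a genuine monoidal model category along the forgetful functor, and to identify the weak equivalences explicitly as Dwyer--Kan equivalences in the sense of Definition~\ref{ddke}. Recall that \cite[Theorem~1.9--1.10]{BMoerH} construct, for any cofibrantly generated monoidal model category $\eM$ satisfying the monoid axiom (and an additional hypothesis on the interval or on cofibrancy of the unit, which is exactly what the two variant theorems isolate), a cofibrantly generated model structure on the category of small $\eM$-enriched categories in which a functor $\phi\colon K\to L$ is a weak equivalence iff it is ``locally a weak equivalence'' (condition (a) of Definition~\ref{ddke}) and ``homotopically essentially surjective'', and a fibration iff it is locally a fibration and has a path-lifting property on the homotopy category. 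Since a path model category $\eM$ is by Definition~\ref{dsmmc} required to satisfy the hypotheses of one of those two theorems, the existence of \emph{some} model structure is immediate; the content of the theorem is the identification of the three classes.

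First I would fix notation and recall the Berger--Moerdijk characterization verbatim, so that the generating (trivial) cofibrations of $\mathbf{M}\text{-Cat}$ are on the table. Next I would verify that ``fibrant object'' in that model structure coincides with our Definition~\ref{ddke} notion (each $\MapK(a,b)$ fibrant in $\eM$): this is the standard computation that the fibrant objects are those $K$ for which the unique map to the terminal $\eM$-category is a fibration, which by the local-fibration condition forces each mapping object to be fibrant, and conversely any such $K$ has trivially a path-lifting property because every morphism is already ``present'' up to homotopy. Then I would check that a ``trivial fibration'' in our sense (Definition~\ref{ddke}, last sentence: locally a fibration \emph{and} a Dwyer--Kan equivalence) agrees with the trivial fibrations of the Berger--Moerdijk structure: here one uses that a local trivial fibration is automatically full on $\pi_0$ and hence, combined with the essential-surjectivity built into being a Dwyer--Kan equivalence, satisfies the required lifting; conversely a Berger--Moerdijk trivial fibration is locally a trivial fibration, hence a local weak equivalence, and is essentially surjective on the homotopy category by the lifting property against the generators. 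Finally I would confirm that the weak equivalences of the transported structure are exactly the Dwyer--Kan equivalences of Definition~\ref{ddke}(a)--(b); condition~(b) (equivalence of $\cI$-graded homotopy categories) must be reconciled with the Berger--Moerdijk condition, which is phrased in terms of $\pi_0$ of mapping \emph{spaces} $\mathbf{map}_{\eM}(1,-)$ rather than $\mathbf{map}_{\eM}(\gamma,-)$ for all $\gamma\in\cI$ — but since each $\gamma$ is cofibrant and $\eM$ is a simplicial model category, a local weak equivalence induces a weak equivalence on every $\mathbf{map}_{\eM}(\gamma,-)$ by SM7, so being an equivalence of the $\{1\}$-homotopy category is upgraded for free to an equivalence of the $\cI$-graded one, and the reverse implication is trivial since $1\in\cI$.

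The main obstacle I expect is precisely this last reconciliation of the essential-surjectivity / homotopy-category condition: one must be careful that ``$\ho^{\cI}K\to\ho^{\cI}L$ is an equivalence of $\cI$-graded categories'' genuinely encodes essential surjectivity on objects, which it does only because these graded categories have the same object sets as $K$ and $L$ and an equivalence of graded categories is in particular essentially surjective; one should note that for this one needs a $\gamma$ for which $[\gamma,-]$ detects the relevant identity maps, and $\gamma=1$ suffices. A secondary subtlety is that Definition~\ref{dsmmc} hands us a \emph{pointed} monoidal model category with an $\eS_{\ast}$-structure, whereas \cite{BMoerH} is stated for general monoidal model categories; but this only restricts the generality, so their theorems apply directly, and the extra $\zeta$-transformation of \eqref{eqzeta} plays no role in \emph{this} theorem (it is needed later for homotopy-invariance of the bracket, not for the existence of the model structure). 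I would close by remarking that Theorem~\ref{tcanmc} is exactly what is needed so that the Dwyer--Kan equivalences admit a calculus of fractions and that cofibrant replacement in $\mathbf{M}\text{-Cat}$ is available — the input to Theorem~A (Theorem~\ref{tdke}).
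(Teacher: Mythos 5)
Your proposal follows essentially the same route as the paper: the paper offers no argument beyond the sentence ``By Definition \ref{dsmmc} and \cite[Theorem 1.9-1.10]{BMoerH} we have:'', i.e.\ the theorem is deduced directly from the hypotheses built into the definition of a path model category together with the Berger--Moerdijk existence theorems. Your additional verifications (identification of the fibrant objects, trivial fibrations, and the reconciliation of the $\cI$-graded homotopy-category condition with the Berger--Moerdijk $\pi\sb{0}$-condition via cofibrancy of the core elements and $1\in\cI$) are correct elaborations of details the paper leaves implicit, not a different approach.
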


\begin{defn}\label{dkhtb}
Let $\eM$ be a path model category with core $\cI$, and let
\w{\dK\up{0}=\lra{K,\{\Fud{0}{i}\}\sb{i=1}\sp{n+1}\}}} be a fixed fibrant
$0$-th order chain complex of length \w{n+1} over $\eM$ for
\w[.]{\Gamma\subseteq\cI} We define \w{\eL\sb{\dK\up{0}}} to be the collection
of all possible fibrant \wwb{n-1}st order chain complexes $\dK$
(of length \w[)]{n+1} extending \w[.]{\dK\up{0}}

Each \w{\dK\in\eL\sb{\dK\up{0}}} has a value
\w[,]{\lra{\dK}:\gamma\sb{1}\otimes\dotsc\otimes\gamma\sb{n+1}\to
\lop{n-1}\MapK(a\sb{n+1},\,a\sb{0})} as in \wref[,]{eqvalue}
which we may identify with a $0$-simplex in the corresponding
simplicial mapping space
\begin{myeq}\label{eqvaluezeros}
\lra{\dK}\in\mapM(\gamma\sb{1}\otimes\dotsc\otimes\gamma\sb{n+1},\
\lop{n-1}\MapK(a\sb{n+1},\,a\sb{0}))\sb{0}~.
\end{myeq}
\noindent By Remark \ref{rloop}
\w{\lop{n-1}\MapK(a\sb{n+1},\,a\sb{0})} is weakly equivalent to the
\wwb{n-1}fold loop space on the mapping space
\w{\map\sb{\eC}(a\sb{n+1},\,a\sb{0})} in $\eM$ (cf.\ \cite[I, \S 2]{QuiH}).
Moreover, we have a natural isomorphism
\begin{myeq}\label{eqadjsimp}
\map\sb{\eM}(Y,X\sp{L})~\xra{\simeq}~\map\sb{\eS}(L,\map\sb{\eM}(Y,X))
\end{myeq}
\noindent for any \w{X,Y\in\eM} and \w{L\in\eS} any finite simplicial set,
by \cite[II, \S 1]{QuiH}), so we may identify the path component
\w{[\lra{\dK}]} of this $0$-simplex with the corresponding element in
\begin{equation*}
\begin{split}
\pi\sb{0}\mapM&(\gamma\sb{1}\otimes\dotsc\otimes\gamma\sb{n+1},\
\Omega\sp{n-1}\MapK(a\sb{n+1},\,a\sb{0}))\\
\cong&~
\pi\sb{0}\Omega\sp{n-1}\mapM(\gamma\sb{1}\otimes\dotsc\otimes\gamma\sb{n+1},\
\MapK(a\sb{n+1},\,a\sb{0}))\\
\cong&~\pi\sb{n-1}\mapM(\gamma\sb{1}\otimes\dotsc\otimes\gamma\sb{n+1},\
\MapK(a\sb{n+1},\,a\sb{0}))
\end{split}
\end{equation*}
\noindent We call the set
$$
\llrra{\dK\up{0}}~:=~
\{[\lra{\dK}]\in\pi\sb{n-1}\mapM(\gamma\sb{1}\otimes\dotsc\otimes\gamma\sb{n+1},\
\MapK(a\sb{n+1},\,a\sb{0}))~:\ \dK\in\eL\sb{\dK\up{0}}
$$
\noindent the \emph{$n$-th order Toda bracket for} \w[.]{\dK\up{0}}  We say that it
\emph{vanishes} if \w[.]{0\in\llrra{\dK\up{0}}}

Of course, \w{\llrra{\dK\up{0}}} may be empty (if there are no \wwb{n-1}st order
chain complexes $\dK$ extending \w[).]{\dK\up{0}} It vanishes if and only if there
is an $n$-th order chain complex extending \w[.]{\dK\up{0}}
\end{defn}

\begin{remark}\label{rsusploop}
When $K$ is a higher chain complex in \w{\eC=\eM} in a monoidal path category
enriched over itself (e.g., for \w{\eM=\Topa} or \w[),]{\Sa}
the homotopy class \w{[\lra{K}]} may be thought of as an element in the group
$$
[\susp\sp{n-1}\gamma\sb{1}\otimes\dotsc\otimes\gamma\sb{n+1}\otimes a\sb{n+1},\
a\sb{0}]\sb{\ast}
$$
\noindent Moreover, \w{[\lra{K}]} vanishes if and only if it represents the
zero element in this group.
\end{remark}

\begin{lemma}\label{lloops}
If $\eM$ is a simplicial model category and \w{f:X\to Y} is
a (trivial) fibration between fibrant objects in $\eM$, then the induced maps
\w{P\sp{k}f:P\sp{k}X\to P\sp{k}Y} and \w{\lop{k}f:\lop{k}X\to\lop{k}Y} are
(trivial) fibrations for all \w[.]{k\geq 1}
Furthermore, if \w{f:X\to Y} is a weak equivalence between
fibrant and cofibrant objects in $\eM$, so are
\w{P\sp{k}f:P\sp{k}X\to P\sp{k}Y} and \w[.]{\lop{k}f:\lop{k}X\to\lop{k}Y}
\end{lemma}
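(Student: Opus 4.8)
The plan is to reduce every assertion to one principle: in our pointed simplicial model category $\eM$, cotensoring with a fixed cofibrant object $K$ of $\Sa$ is a right Quillen endofunctor $(-)^{K}\colon\eM\to\eM$. This holds because the pushout--product axiom SM7 makes $-\otimes K\colon\eM\to\eM$ left Quillen for every $K\in\Sa$ (all pointed simplicial sets being cofibrant), so its right adjoint $(-)^{K}$ is right Quillen; consequently $(-)^{K}$ preserves fibrations, preserves trivial fibrations, and --- by Ken Brown's lemma --- carries weak equivalences between fibrant objects to weak equivalences. It therefore suffices to exhibit $P^{k}$ and $\lop{k}$ as cotensor functors of this kind.

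The identification is supplied by Remark \ref{rloop}, whose proof uses nothing beyond the simplicial enrichment and the pullback \eqref{eqpath}: unwinding that pullback shows $PX\cong X^{J}$, where $J:=(\Delta[1],\{0\})\in\Sa$ is the $1$-simplex pointed at its $0$-vertex; hence $P^{k}(-)\cong(-)^{J^{\wedge k}}$. Under this identification the face maps $\di{i}=P^{i}(\rh_{P^{n-i}X})$ of \eqref{eqsimpface} become the cotensors $X^{\delta_{i}}$ of explicit inclusions $\delta_{i}\colon J^{\wedge(j-1)}\hookrightarrow J^{\wedge j}$ of cubical faces in $\Sa$, so $\lop{k}(-)\cong(-)^{L_{k}}$, where $L_{k}:=\colim_{1\le j\le k}J^{\wedge j}$, the colimit taken over the diagram dual to the one defining $\lop{k}X$; this $L_{k}$ is the pointed simplicial set of Remark \ref{rloop}, and both $J^{\wedge k}$ and $L_{k}$, being objects of $\Sa$, are cofibrant. (For the $P^{k}$ assertions alone one may bypass this: $Pf$ is the base change of the map $(\es{0},f^{\Delta[1]})\colon X^{\Delta[1]}\to X\times_{Y}Y^{\Delta[1]}$ along $PY\to X\times_{Y}Y^{\Delta[1]}$, $\beta\mapsto(\ast,\beta)$, and $(\es{0},f^{\Delta[1]})$ is a trivial fibration by SM7 for the anodyne inclusion $\{0\}=\Lambda^{0}[1]\hookrightarrow\Delta[1]$ and the fibration $f$; iterating on $k$ then even shows that $P^{k}f$ is a trivial fibration whenever $f$ is any fibration.)

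Granting the identifications the lemma is immediate: $P^{k}f=f^{J^{\wedge k}}$ and $\lop{k}f=f^{L_{k}}$, so if $f$ is a (trivial) fibration then $P^{k}f$ and $\lop{k}f$ are (trivial) fibrations because $(-)^{J^{\wedge k}}$ and $(-)^{L_{k}}$ are right Quillen, while if $f$ is a weak equivalence between fibrant objects then $P^{k}f$ and $\lop{k}f$ are weak equivalences because a right Quillen functor preserves weak equivalences between fibrant objects (only fibrancy, not cofibrancy, of $X$ and $Y$ is used). I expect the main obstacle to be the identification step itself, where one must check that the pullback \eqref{eqpath} genuinely computes the pointed cotensor $X^{J}$ --- the delicate point being that the augmentation $\ast\to X$ is the zero map, i.e.\ the cotensor of the unique map $S^{0}\to\ast$ in $\Sa$, so that $PX$ is the cotensor of the pushout $\Delta[1]_{+}\sqcup_{S^{0}}\ast=(\Delta[1],\{0\})$ --- and that feeding this into \eqref{eqsimpface} really yields cotensors of maps in $\Sa$. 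Everything downstream of that is formal.
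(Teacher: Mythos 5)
Your proof is correct and takes essentially the same route as the paper's: both arguments rest on SM7 (cotensoring with a cofibrant pointed simplicial set being right Quillen) together with the identification of $P^{k}$ and $\lop{k}$ as cotensors with the cubical complexes of Remark \ref{rloop} \wh the paper merely phrases the $\lop{k}$ case as base change along the fibration $X^{C^{n}_{+}}\to X^{\partial C^{n}_{+}}$ induced by the cubical star/link pair, rather than directly as a cotensor with the quotient, and treats $P$ via the cofiber sequence $S^{0}\hra\Delta[1]_{+}\epic\Delta[1]$. Your explicit appeal to Ken Brown's lemma for the ``furthermore'' clause (noting that only fibrancy is needed) is, if anything, slightly more complete than the paper's handling of that point, which is left implicit in the homotopy-pullback remark.
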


\begin{proof}
Using Axiom SM7 for the simplicial model category $\eM$, the natural isomorphism
\wref[,]{eqadjsimp} and SM7 for $\eS$ itself (cf.\ \cite[II, \S 1-3]{QuiH}),
we see that
\begin{enumerate}
\renewcommand{\labelenumi}{(\alph{enumi})~}
\item Any (trivial) cofibration \w{i:K\hra L} in $\eS$ induces a (trivial)
fibration
\w[,]{i\sp{\ast}:X\sp{L}\epic X\sp{K}} as long as \w{X\in\eM} is fibrant.
\item Any (trivial) fibration \w{f:X\to Y} in $\eM$ induces a (trivial) fibration
\w{f\sb{\ast}:X\sp{K}\epic Y\sp{K}} for any (necessarily cofibrant) \w[.]{K\in\eS}
\end{enumerate}

In particular, let \w{C\sp{n}\sb{+}} denote the sub-simplicial set of the cube
boundary \w{\partial I\sp{n}} consisting of all facets adjacent to a fixed
corner $v$ (i.e., the cubical star of $v$ in \w[),]{\partial I\sp{n}} with
\w{\partial C\sp{n}} its boundary (the cubical link of $v$). The cofibration
\w{i:\partial C\sp{n}\sb{+}\hra C\sp{n}\sb{+}} makes
\w{i\sp{\#}:X\sp{C\sp{n}\sb{+}}\to X\sp{\partial C\sp{n}\sb{+}}} a fibration in $\eM$, by (a).

In particular, the pullback square
\mydiagram[\label{eqcornerfib}]{
\ar @{} [drr] |<<<<<{\framebox{\scriptsize{PB}}}
\lop{n-1}X \ar[rr] \ar[d] &&
X\sp{C\sp{n}\sb{+}} \ar@{->>}[d]^(.4){i\sp{\#}}\\
\ast \ar[rr] && X\sp{\partial C\sp{n}\sb{+}}
}
\noindent defining \w{\lop{n-1}X} (see \S \ref{dloop} and compare \S \ref{scdesc})
is a homotopy pullback (see \cite{MathP}).

Thus if \w{f:X\to Y} is a (trivial) fibration in $\eM$, then the induced map
\w{\lop{n-1}f:\lop{n-1}X\to\lop{n-1}Y} is a (trivial) fibration, by (b).

Similarly, if we consider the (pointed) cofibration sequence in \w[:]{\Sa}
$$
S\sp{0}=\{0,\ast\}~\hra~\Delta[1]\sb{+}=[0,1]\cup\{\ast\}~\epic~\Delta[1]=[0,1]
$$
\noindent (with $\ast$ as basepoint in the first two, and $0$ as the basepoint
in the cofiber), we see from the corresponding fibration sequence in $\eM$:
$$
PX=X\sp{\Delta[1]}~\hra~X\sp{I}~\epic~X\sp{S\sp{0}}=X
$$
\noindent that if \w{f:X\to Y} is a (trivial) fibration in $\eM$, so is
\w[,]{Pf:PX\to PY} by (b) again (see \S \ref{rsmmc} above).
\end{proof}

\begin{lemma}\label{lpathloop}
If $X$ is a fibrant object in a simplicial model category $\eM$, then for
each \w{n\geq 0} the map \w{\wrh{n}{X}:P\sp{n+1}X\to\lop{n}X} of \wref{eqloopmap}
 is a fibration.
\end{lemma}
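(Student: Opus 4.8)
The plan is to show that $\wrh{n}{X}$ is built out of the fibrations produced in Lemma~\ref{lloops} by a sequence of pullbacks, so that it inherits the fibration property. Recall from \S\ref{dloop} that $\lop{n}X$ is the $n$-th matching object of the restricted augmented cosimplicial-type object $P\sp{\bullet}X$, and $\wrh{n}{X}$ is the canonical map from $P\sp{n+1}X$ to this matching object. Equivalently, using the cubical description of Remark~\ref{rloop}, $P\sp{n+1}X$ is identified with $X\sp{C\sp{n+2}\sb{+}}$ — maps of the cubical star $C\sp{n+2}\sb{+}$ of a fixed corner $v$ in $\partial I\sp{n+2}$ sending the opposite corner to the basepoint — while $\lop{n}X$ is identified with $X\sp{C\sp{n+1}\sb{+}}$ (maps of the smaller cubical star, again rel the opposite corner). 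Under these identifications $\wrh{n}{X}$ is induced by restricting a cube map to the boundary star of one of the facets; more precisely, it is the map $X\sp{C\sp{n+2}\sb{+}}\to X\sp{\partial\sb{v}(C\sp{n+2}\sb{+})}$ obtained by restricting along the inclusion of the appropriate subcomplex of $\partial I\sp{n+2}$.

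First I would make the combinatorial identification precise: the facets of $I\sp{n+2}$ adjacent to $v$ are an $(n+1)$-cube apiece, and the union of their sub-stars around $v$ (with the cube opposite $v$ collapsed to the basepoint) is exactly the simplicial set whose mapping space is $\lop{n}X$, by the discussion of \S\ref{scdesc} and the homeomorphism described there between the cone on the transverse simplex and $I\sp{n+2}$. Thus the inclusion of this subcomplex into $C\sp{n+2}\sb{+}$ is a cofibration $j:D\hra C\sp{n+2}\sb{+}$ in $\Sa$ (it is a levelwise injection), and $\wrh{n}{X}$ is the map $j\sp{\#}:X\sp{C\sp{n+2}\sb{+}}\to X\sp{D}$, exactly as in the proof of Lemma~\ref{lloops} where the cofibration $i:\partial C\sp{n}\sb{+}\hra C\sp{n}\sb{+}$ was used. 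Then by part~(a) of that proof (Axiom SM7 plus the adjunction \wref{eqadjsimp}), since $X$ is fibrant, $j\sp{\#}$ is a fibration in $\eM$. This is the whole argument, modulo the bookkeeping.

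The main obstacle will be getting the cubical/simplicial combinatorics exactly right: one must verify that the target of $\wrh{n}{X}$, defined abstractly as the limit $\lop{n}X=\lim\sb{1\le k\le n}P\sp{k}X$ over the face maps $\di{i}\sp{k}$, really is the exponential $X\sp{D}$ for the subcomplex $D$ just described, and that the canonical map to the limit agrees with restriction $j\sp{\#}$. This is essentially the content of Remark~\ref{rloop} together with \S\ref{scdesc}, so I would either cite those directly or spell out the identification of the limit diagram (dual to $\Delta[n]$) with the poset of faces of the cubical star. A clean alternative, avoiding explicit cube combinatorics, is to argue by induction on $n$ using the matching-object description: $\lop{n}X$ is the limit of a punctured diagram built from $\lop{n-1}X$ and copies of $P\sp{n}X$, and $\wrh{n}{X}$ factors as $P\sp{n+1}X$ mapping to an iterated pullback of the already-known fibrations $\di{i}\sp{n}$ and $\wrh{n-1}{X}$; since a map into a limit of fibrations along a compatible cone is a fibration provided the relevant matching maps are (by the simplicial SM7 argument of Lemma~\ref{lloops} applied to the relevant cofibration of cubical stars), one concludes. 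Either way the only real input is SM7 for $\eM$ and $\eS$, already invoked in Lemma~\ref{lloops}; the combinatorial identification is the step that needs care.
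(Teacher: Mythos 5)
Your overall strategy is the right one, and it is in fact the paper's: exhibit $\wrh{n}{X}$ as the result of cotensoring the fibrant object $X$ against a cofibration of pointed simplicial sets, and then invoke the SM7 argument (items (a)/(b) in the proof of Lemma \ref{lloops}, via \wref[)]{eqadjsimp}. However, the concrete combinatorial identifications you write down are incorrect, and since that identification is the only substantive content of the lemma, this is a genuine gap. First, $P\sp{n+1}X$ is \emph{not} the cotensor of $X$ against the cubical star $C\sp{n+2}\sb{+}$ of $v$ in $\partial I\sp{n+2}$ rel the opposite corner: by \wref{eqcornerfib} that pointed cotensor $X\sp{C\sp{n+2}\sb{+}/\partial C\sp{n+2}\sb{+}}$ is $\lop{n+1}X$, not $P\sp{n+1}X$ (already for $n=0$ it gives $\lop{1}X=PX\times\sb{X}PX$ of \wref[,]{eqlopo} rather than $PX$). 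Second, the subcomplex $D$ you propose as modelling $\lop{n}X$ \wh the union of the sub-stars at $v$ of the $(n+1)$-facets of $I\sp{n+2}$, i.e.\ all $n$-faces of $I\sp{n+2}$ containing $v$ \wwh has $\binom{n+2}{2}$ top cells rather than $n+1$, so its pointed cotensor is again not $\lop{n}X$ (for $n=1$ you would obtain a limit of \emph{three} copies of $PX$ over $X$ instead of the two in \wref[).]{eqlopo} Consequently the restriction map $j\sp{\#}$ you construct is not $\wrh{n}{X}$.

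The correct identification lives one dimension down, inside the $(n+1)$-cube itself, which is what the paper does: by Remark \ref{rloop}, $P\sp{n+1}X\cong X\sp{I\sp{n+1}/C\sp{n+1}\sb{-}}$ (the whole cube rel the star of the corner opposite $v$), while $\lop{n}X\cong X\sp{C\sp{n+1}\sb{+}/\partial C\sp{n+1}\sb{+}}\cong X\sp{\partial I\sp{n+1}/C\sp{n+1}\sb{-}}$, and $\wrh{n}{X}$ is restriction along $\partial I\sp{n+1}\subset I\sp{n+1}$. The induced map of quotients $C\sp{n+1}\sb{+}/\partial C\sp{n+1}\sb{+}\hra I\sp{n+1}/C\sp{n+1}\sb{-}$ is an inclusion, hence a cofibration in $\Sa$ (this is the paper's diagram \wref[),]{eqtwocofibs} and cotensoring the fibrant $X$ against it yields the fibration; the case $n=0$ can also be seen directly from the pullback \wref[.]{eqpathfib} Your fallback inductive ``matching object'' argument does not close the gap as written: the assertion that a map into a limit of fibrations is a fibration ``provided the relevant matching maps are'' is precisely what needs proving (it is the Reedy-type statement behind \cite[\S 16.3.7]{PHirM}), so without either the correct cubical identification or that machinery made precise, the proof remains incomplete.
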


Note that for \w[,]{n=0} \w{\lop{0}X=X} and \w{\wrh{0}{X}} is simply
\w[.]{\rh\sb{X}:PX\to X}

\begin{proof}
If we consider the map of cofibration sequences (pushouts to $\ast$) in $\eS$:
\mydiagram[\label{eqtwocofibs}]{
\partial C\sp{n}\sb{+} \ar@{^{(}->}[rr]  \ar@{^{(}->}[d] &&
C\sp{n}\sb{+} \ar[rr]  \ar@{^{(}->}[d] &&
C\sp{n}\sb{+}/\partial C\sp{n}\sb{+} \ar@{^{(}->}[d]\\
C\sp{n}\sb{-} \ar@{^{(}->}[rr] && I\sp{n}\ar[rr]  &&
I\sp{n}/\partial C\sp{n}\sb{-}
}
\noindent we see that the natural map
\w{C\sp{n}\sb{+}/\partial C\sp{n}\sb{+} \to I\sp{n}/\partial C\sp{n}\sb{-}} is an inclusion
(cofibration) in \w[,]{\Sa} so the natural map  it  induces \wh namely,
 \w{\wrh{n}{X}::P\sp{n+1}X\to\lop{n}X} \wwh is a fibration by (b) above.

For \w{n=0} this follows directly because  \w{\rh\sb{X}} is a pullback in the following diagram:
\mydiagram[\label{eqpathfib}]{
\ar @{} [drr] |<<<<<{\framebox{\scriptsize{PB}}}
PX \ar[rr] \ar[d]_{\rh\sb{X}} && \uP{X} \ar[d]^{\es{0}\top\es{1}} \\
X \ar[rr]^{\Id\top\ast} && X\times X
}
\noindent where \w{\es{0}\top\es{1}} is a fibration since it is induced by
the cofibration \w{\{0,1\}\hra\Delta[1]} in $\eS$.
\end{proof}

\begin{thm}\label{tdke}
Let $\eM$ be a path model category with core $\cI$, and let
\w{\dK\up{0}=\lra{K,\,\{\Fud{0}{i}\}\sb{i=1}\sp{n+1}\}}} and
\w{\dL\up{0}=\lra{L,\,\{\Gud{0}{i}\}\sb{i=1}\sp{n+1}\}}}
be $0$-th order chain complexes of length \w{n+1} over $\eM$ (for the same
\w[)]{\Gamma\subseteq\cI} with $K$ and $L$ fibrant, and let
\w{\phi\up{0}:\dK\up{0}\to\dL\up{0}} be a map of $0$-th order chain complexes
which is a Dwyer-Kan equivalence. Then the resulting equivalence of categories
\w{\phi\sb{\ast}:\ho\sp{\cI}K\to\ho\sp{\cI}L} induces a bijection between
\w{\llrra{\dK\up{0}}} and \w[.]{\llrra{\dL\up{0}}}
\end{thm}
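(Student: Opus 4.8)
The plan is to reduce the general Dwyer-Kan equivalence $\phi\up{0}$ to the case of a \emph{trivial fibration} of enriched categories, and then exploit lifting to move $(n-1)$st order chain complexes and their values back and forth along $\phi$. First I would use the model category structure on $\MCat$ from Theorem \ref{tcanmc}: factor $\phi\up{0}$ as a trivial cofibration followed by a trivial fibration, and (since both $K$ and $L$ are fibrant, so cofibrant-fibrant replacement can be arranged on the $0$th-order level while keeping the $\Fud{0}{i}$'s and $\Gud{0}{i}$'s fixed) reduce to proving the theorem when $\phi\up{0}$ is a trivial fibration $\psi: M \to N$ of fibrant enriched categories, i.e.\ each $\psi: \MapM(a,b)\epic\MapN(\psi a,\psi b)$ is a trivial fibration in $\eM$. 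The reverse direction (a trivial cofibration into a fibrant object admits a retraction, up to homotopy) is handled by the usual two-out-of-three argument once the trivial-fibration case is established, so the crux is that single case.

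For a trivial fibration $\psi: M\to N$: given any $(n-1)$st order chain complex $\dK\in\eL\sb{\dK\up{0}}$ extending $\dK\up{0}$, pushing forward gives $\dL=\psi\sb{\ast}\dK\in\eL\sb{\dL\up{0}}$ by Definition \ref{dmaphocc}, and by Remark \ref{rhtb} and functoriality of the limit $\lop{n-1}$ we get $\psi\sb{\ast}\lra{\dK}=\lra{\dL}$, hence $\llrra{\dK\up{0}}$ maps into $\llrra{\dL\up{0}}$; this uses that $\psi\sb{\ast}: \pi\sb{n-1}\mapM(\gamma\sb{1}\otimes\dotsc\otimes\gamma\sb{n+1},\MapM(a\sb{n+1},a\sb{0})) \to \pi\sb{n-1}\mapM(\gamma\sb{1}\otimes\dotsc\otimes\gamma\sb{n+1},\MapN(\psi a\sb{n+1},\psi a\sb{0}))$ is a bijection, which follows from SM7 (each $\gamma\sb{i}$ is cofibrant) together with the fact that $\psi: \MapM(a\sb{n+1},a\sb{0})\to\MapN(\psi a\sb{n+1},\psi a\sb{0})$ is a weak equivalence between fibrant (and, by taking $\gamma$'s cofibrant, effectively cofibrant-enough) objects. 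For \emph{surjectivity} of the induced map on brackets, I would take any $\dL\in\eL\sb{\dL\up{0}}$ and build a lift $\dK\in\eL\sb{\dK\up{0}}$ with $\psi\sb{\ast}\dK=\dL$ by inductively lifting the generalized elements $\Fud{k}{i}:\gamma\sb{i-k}\otimes\dotsc\otimes\gamma\sb{i}\to P\sp{k}\MapM(a\sb{i},a\sb{i-k-1})$ against the trivial fibration $P\sp{k}\psi: P\sp{k}\MapM(a\sb{i},a\sb{i-k-1})\epic P\sp{k}\MapN(\dotsc)$ — which is a trivial fibration by Lemma \ref{lloops} — with the constraint \wref{eqhocc} that $\di{t}\circ\Fud{k}{i}$ has a prescribed value: that is, I would lift against the map $P\sp{k}\MapM(a\sb{i},a\sb{i-k-1})\to P\sp{k}\MapN(\dotsc)\times\sb{\lop{k-1}\MapN(\dotsc)}\lop{k-1}\MapM(a\sb{i},a\sb{i-k-1})$, which is again a trivial fibration using Lemma \ref{lloops}, Lemma \ref{lpathloop}, and the fact that $\gamma\sb{i-k}\otimes\dotsc\otimes\gamma\sb{i}$ is cofibrant.

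The main obstacle I expect is the \emph{compatibility bookkeeping} in the inductive lift: to apply the lifting property with the simplicial structure one works with $\mapM(\gamma\sb{i-k}\otimes\dotsc\otimes\gamma\sb{i},-)$ and needs the target of the lifting problem — the fibered product of $P\sp{k}\MapN$ over the matching object $\lop{k-1}$ with the matching object of $\MapM$ — to actually be a (trivial) fibration onto, so that the partially-defined family of higher nullhomotopies (already lifted in lower degrees $k'<k$, compatibly with all face maps) can be extended one step further. Verifying that the relevant matching-object map is a trivial fibration, and that the already-chosen lifts in lower filtration assemble correctly into a point of the matching object (this is exactly where \wref{eqsimpcomp} and \wref{eqassociativity} are used), is the delicate part; everything else is formal manipulation of limits and applications of SM7. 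Once surjectivity and injectivity of $\psi\sb{\ast}$ on the bracket sets are both in hand for trivial fibrations, the general case follows by factoring and two-out-of-three as indicated, so that $\phi\sb{\ast}$ restricts to a bijection $\llrra{\dK\up{0}}\xrightarrow{\cong}\llrra{\dL\up{0}}$.
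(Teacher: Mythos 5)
Your plan is correct in outline and follows the same global architecture as the paper's proof: use the model structure on $\MCat$ (Theorem \ref{tcanmc}) to reduce to the case of a trivial fibration of fibrant enriched categories, then build an extension of $\dK\up{0}$ by induction on the order $k$, lifting each $\Fud{k}{i}$ using cofibrancy of $\gamma\sb{i-k}\otimes\dotsb\otimes\gamma\sb{i}$ together with the fibration properties of $P\sp{k}$, $\lop{k-1}$ and $\wrh{k-1}{}$ (Lemmas \ref{lloops} and \ref{lpathloop}), and obtain injectivity from the induced isomorphism on $\pi\sb{n-1}\mapM(\gamma\sb{1}\otimes\dotsb\otimes\gamma\sb{n+1},-)$. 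The one genuine difference is in the heart of the induction. The paper only claims that the comparison map $\xi$ from $P\sp{k}\MapK(a\sb{i},a\sb{i-k-1})$ to the pullback $Q\sb{i}=P\sp{k}\MapL(a\sb{i},a\sb{i-k-1})\times\sb{\lop{k-1}\MapL(a\sb{i},a\sb{i-k-1})}\lop{k-1}\MapK(a\sb{i},a\sb{i-k-1})$ is a weak equivalence (two-out-of-three), and therefore has to factor $\xi$ and perform two successive lifts \wref{eqfirstlift}--\wref[;]{eqsecondlift} you assert instead that this pullback-corner map is itself a trivial fibration and do a single lift. That stronger assertion is in fact true in a path model category \wh it follows from the pointed form of SM7 in its pullback-corner formulation, applied to the cofibration $C\sp{k}\sb{+}/\partial C\sp{k}\sb{+}\hra I\sp{k}/\partial C\sp{k}\sb{-}$ identified in the proof of Lemma \ref{lpathloop} \wwh but it is strictly more than what Lemmas \ref{lloops} and \ref{lpathloop} state, so you would need to prove this relative version as a separate lemma; the payoff is a cleaner induction, since your single lift gives $P\sp{k}\phi\circ\Fud{k}{i}=\Gud{k}{i}$ on the nose. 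You correctly flag the only other delicate point, namely that the lower-order lifts assemble into a map to the matching object $\lop{k-1}\MapK$ via \wref[.]{eqsimpcomp} One caution: your dismissal of the trivial-cofibration case ``by the usual two-out-of-three argument'' is too quick as stated \wh the lifted retraction $\rho$ is only a strict one-sided inverse and is not a trivial fibration, so the paper has to pass through a path object $(\dL\up{0})\sp{\eI}$ in $\MCat$ and apply the trivial-fibration case to its two projections $d\sb{0},d\sb{1}$; some argument of this kind must appear in your reduction as well.
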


\begin{proof}
We assume for simplicity that $\phi$ is the identity on objects, so we may
identify both \w{\pi\sb{0}\mapM(\gamma,\MapK(a,a'))} and
\w{\pi\sb{0}\mapM(\gamma,\MapL(a,a'))} as \w[.]{[a,a']\sb{\gamma}}
Similarly we may identify the groups \w{\pi\sb{\ast}\mapM(\gamma,\MapK(a,a'))} and
\w[.]{\pi\sb{\ast}\mapM(\gamma,\MapL(a,a'))}

Given an \wwb{n-1}st order chain complex $\dK$ extending \w[,]{\dK\up{0}} $\phi$
induces an \wwb{n-1}st order chain complex $\dL$ extending \w[,]{\dL\up{0}} as in
\S \ref{dmaphocc}, and takes the value
\w{\lra{\dK}\subset
[a\sb{n+1},\,a\sb{0}]\sb{\gamma\sb{1}\otimes\dotsc\otimes\gamma\sb{n+1}}} to
\w[\vsm.]{\lra{\dL}}

\noindent\textbf{(a)} \ First assume that  \w{\phi\up{0}:\dK\up{0}\to\dL\up{0}}
is a trivial fibration.

To show that the above correspondence is a bijection, let $\dL$ be
an \wwb{n-1}st order chain complex extending \w[.]{\dL\up{0}}
We show by induction on \w{k\geq 0} that we have
an $k$-th order chain complex \w{\dK\up{k}} extending \w[,]{\dK\up{0}} where
\w{\phi\sb{\ast}\dK\up{k}} agrees with $\dL$ to $k$-th order (by assumption
this holds  for \w[).]{k=0}

 In the induction step, we have a \wwb{k-1}st order chain complex
\w{\dK\up{k-1}} such that \w{\phi\sb{\ast}\dK\up{k-1}} agrees with $\dL$ to
\wwb{k-1}st order, which we wish to extend to \w[.]{\dK\up{k}}
Thus we have a commuting diagram
\begin{myeqn}[\label{eqfirstpb}]{
\begin{xy}
(-40,40)*+{\gamma\sb{i-k}\otimes\dotsc\otimes \gamma\sb{i}}="g";
(5,40)*+{P\sp{k}\MapK(a\sb{i},\,a\sb{i-k-1})}="pk";
(5,25)*+{Q\sb{i}}="q";
(14,20)*+{PB}*\frm{-};
(60,25)*+{P\sp{k}\MapL(a\sb{i},\,a\sb{i-k-1})}="pl";
(5,5)*+{\lop{k-1}\MapK(a\sb{i},\,a\sb{i-k-1})}="k";
(60,5)*+{\lop{k-1}\MapL(a\sb{i},\,a\sb{i-k-1})}="l";
{\ar@/^3.5pc/^{\Gud{k}{i}}"g";"pl"};
{\ar@/_2pc/_{\alpha\sb{K}}"g";"k"};
{\ar@{-->}_{\psi}"g";"q"};
{\ar@{-->}_{\simeq}^{\xi}"pk";"q"};
{\ar@{->>}^{P\sp{k}\phi}"pk";"pl"};
{\ar@<-5.5ex>@{->>}_(.6){\wrh{k-1}{K}}"pk";"k"};
{\ar@{->>}^(.4){p\sb{2}}_(.4){\simeq}"q";"pl"};
{\ar@{->>}^{p\sb{1}}"q";"k"};
{\ar@{->>}^{\wrh{k-1}{L}}"pl";"l"};
{\ar^{\lop{k-1}\phi}_{\simeq}"k";"l"};
\end{xy}
}
\end{myeqn}
\noindent in which \w{Q\sb{i}} is the pullback as indicated, and
\w{\alpha\sb{K}:\gamma\sb{i-k}\otimes\dotsc\otimes\gamma\sb{i}\to
\lop{k-1}\MapK(a\sb{i},\,a\sb{i-k-1})} into the limit is induced by the maps
\w{\Fud{k-1}{t}} \wb[.]{t=0,\dotsc,k-1}

Here \w{p\sb{2}} is a trivial fibration and \w{p\sb{1}} is a fibration by
base change (using Lemmas \ref{lloops} and \ref{lpathloop}). The maps
\w{\psi:\gamma\sb{i-1}\otimes\gamma\sb{i}\to Q\sb{i}} and
\w{\xi:P\sp{k}\MapK(a\sb{i},a\sb{i-k-1})\to Q\sb{i}}
exist by the universal property, and $\xi$ is a weak equivalence by the $2$ out of
$3$ property. Factor $\xi$ as
$$
P\sp{k}\MapK(a\sb{i},a\sb{i-k-1})~\xra{j}~
\widehat{P\sp{k}\MapK}(a\sb{i},a\sb{i-k-1})~
\xra{\widehat{\xi}}~Q\sb{i}~,
$$
\noindent where $j$ a trivial cofibration and \w{\widehat{\xi}} is a trivial
fibration.
Since \w{\gamma\sb{i-k}\otimes\dotsc
\otimes\gamma\sb{i}\in\cI} is cofibrant, we have a lifting
as indicated in the solid commuting square:
\mydiagram[\label{eqfirstlift}]{
\ast \ar[rr] \ar@{^{(}->}[d] &&
\widehat{P\sp{k}\MapK}(a\sb{i},\,a\sb{i-k-1}) \ar@{->>}[d]^{\widehat{\xi}}_{\simeq}\\
\gamma\sb{i-k}\otimes\dotsc\otimes\gamma\sb{i} \ar[rr]_{\psi}
\ar@{-->}[rru]^{\widehat{\psi}}&& Q\sb{i}
}
\noindent Since $j$ is a trivial cofibration and \w{\wrh{k}{X}} is
a fibration (for \w[)]{X:=\lop{k-1}\MapK(a\sb{i},a\sb{i-k-1})} by
Lemma \ref{lpathloop}, we have a lift $\zeta$ as indicated in:
\mydiagram[\label{eqsecondlift}]{
P\sp{k}\MapK(a\sb{i},a\sb{i-k-1})  \ar@{=}[rr] \ar@{^{(}->}[d]^{j}_{\simeq} &&
P\sp{k}\MapK(a\sb{i},a\sb{i-k-1}) \ar@{->>}[d]^{\wrh{k-1}{K}} \\
\widehat{P\sp{k}\MapK}(a\sb{i},a\sb{i-k-1}) \ar@{->>}[rr]_{\widehat{\sigma}}
\ar@{-->}[rru]^{\zeta}&& \lop{k-1}\MapK(a\sb{i},a\sb{i-k-1}),
}
\noindent for \w[.]{\widehat{\sigma}:=p\sb{1}\circ\widehat{\xi}}
Thus if we set
\w{\Fud{k}{i}:\gamma\sb{i-k}\otimes\dotsc\otimes\gamma\sb{i}\to
P\sp{k}\MapK(a\sb{i},a\sb{i-k-1})} equal to \w[,]{\zeta\circ\widehat{\psi}}
we see that
$$
\pi\sb{t}\circ\wrh{k-1}{K}\circ\Fud{k}{i}~=~
\di{t}\circ\Fud{k}{i}~=~\hmu{k-t-1,t}(\Fud{k-t-1}{i-t-1}\otimes\Fud{t}{i})
$$
\noindent (see \S \ref{dloop} and \wref[)]{eqhocc} for all \w[,]{0\leq t<k} and
\w[.]{\phi\circ\Fud{k}{i}=\Gud{k}{i}}

Thus by induction we see that any \wwb{n-1}st order chain complex \w{\dL\up{n-1}}
extending \w{\dL\up{0}} lifts along $\phi$ to \w[,]{\dK\up{n-1}} so that
\w{\phi\sb{\ast}} is surjective.

On the other hand, since $\phi$ is a trivial fibration in \w[,]{\mapM} in particular
\w{\lop{n-1}\phi:\lop{n-1}\MapK(a\sb{n+1},a\sb{0})\to
\lop{n-1}\MapL(a\sb{n+1},a\sb{0})} is a trivial fibration in $\eM$, so it
induces an isomorphism
$$
\pi\sb{n-1}\mapM(\gamma\sb{1}\dotsc\gamma\sb{n+1},
\MapK(a\sb{n+1},a\sb{0}))\xra{\cong}
\pi\sb{n-1}\mapM(\gamma\sb{1}\dotsc\gamma\sb{n+1},
\MapL(a\sb{n+1},a\sb{0}))
$$
\noindent by SM7. Thus if \w{[\lra{\phi\sb{\ast}\dK}]=[\lra{\phi\sb{\ast}\dK'}]} in
\w[,]{\pi\sb{n-1}\mapM(\gamma\sb{1}\dotsc\gamma\sb{n+1},\
\MapL(a\sb{n+1},\,a\sb{0}))} then \w{[\lra{\dK}]=[\lra{\dK'}]} in
\w[.]{\pi\sb{n-1}\mapM(\gamma\sb{1}\dotsc\gamma\sb{n+1},\
\MapK(a\sb{n+1},\,a\sb{0}))}

We can see directly that
\w{\lra{\dL\up{n-1}}} vanishes if and only if it lifts to
\w[,]{\Fud{n}{n+1}:\gamma\sb{0}\otimes\dotsc\otimes\gamma\sb{n+1}\to
P\sp{n+1}\MapK(a\sb{n+1},a\sb{0})} this happens if and only if the corresponding
value \w{\lra{\dK\up{n-1}}} vanishes, too\vsm.

\noindent\textbf{(b)} \  Now assume that  \w{\phi\up{0}:\dK\up{0}\to\dL\up{0}} is
an arbitrary weak equivalence, but that \w{\dK\up{0}} and \w{\dL\up{0}} are
both fibrant and cofibrant. Factoring
 \w{\phi\up{0}} as a trivial cofibration followed by a trivial fibration, by (a) it
suffices to assume that \w{\phi\up{0}} is a trivial cofibration. This implies that
we have a lifting as indicated in the diagram of $\eM$-categories
\mydiagram[\label{eqthirdlift}]{
\dK\up{0} \ar@{=}[rr] \ar@{^{(}->}[d]^{\phi}_{\simeq} && \dK\up{0} \ar@{->>}[d] \\
\dL\up{0} \ar[rr] \ar@{-->}[rru]^{\rho} && \ast
}
\noindent using Theorem \ref{tcanmc}. Thus by \cite[Proposition 1.2.8]{HovM},
$\phi$ is a homotopy equivalence (with strict left inverse $\rho$). Therefore,
if \w{H:\dL\up{0}\to(\dL\up{0})\sp{\eI}} is a right homotopy
\w{\phi\circ\rho\sim\Id} into a path object for \w{\dL\up{0}} in \w{\MCat}
(cf.\ \cite[I, \S 1]{QuiH}), the two trivial fibrations
\w{d\sb{0},d\sb{1}:(\dL\up{0})\sp{\eI}\epic\dL\up{0}}
induce the required bijection by (a)\vsm.

\noindent\textbf{(c)} \  Finally, if  \w{\phi:\dK\up{0}\to\dL\up{0}}
is any Dwyer-Kan equivalence, with cofibrant replacements
\w{\psi:\widehat{\dK}\up{0}\epic\dK\up{0}} and
\w{\xi:\widehat{\dL}\up{0}\epic\dL\up{0}} in \w{\MCat} (so both $\psi$ and $\xi$
are trivial fibrations), we have a lifting
\mydiagram[\label{eqfourthlift}]{
\ast \ar@{=}[rrrr] \ar@{^{(}->}[d] &&&&
\widehat{\dL}\up{0} \ar@{->>}[d]^{\xi}_{\simeq} \\
\widehat{\dK}\up{0} \ar@{->>}[rr]^{\psi}_{\simeq} \ar@{-->}[rrrru]^{\rho} _{\simeq}&&
\dK\up{0} \ar[rr]^{\phi}_{\simeq}  && \dL\up{0}
}
\noindent where $\rho$ is a Dwyer-Kan equivalence between fibrant and cofibrant
$\eM$-categories, so it induces a bijection as required by (b), while
$\psi$ and $\xi$ are trivial fibrations in \w[,]{\MCat} so they induce the
required bijections by (a).  Since the lower right quadrangle in \wref{eqfourthlift}
commutes, $\phi$ also induces a bijection as required.
\end{proof}

\begin{defn}\label{dhchaincx}
Given a path model category $\eM$ with core $\cI$, let $\eC$ be a (small)
subcategory of \w{\MCat} consisting of fibrant $0$-th order chain complexes
of length \w{N=n+1} for \w[.]{\Gamma\subseteq\cI} If $\sim$ is the equivalence
relation on $\eC$ generated by Dwyer-Kan equivalences, let
\w[.]{\Ho\sp{\Gamma}\eC:=\eC/\sim} An equivalence class in \w{\Ho\sp{\Gamma}\eC}
will be called a \emph{homotopy chain complex} for $\Gamma$.
\end{defn}

\begin{example}\label{eghchaincx}
Our motivating example is when $\eC$ is an $\eM$-subcategory of a model category
\w[,]{\eC'} whose weak equivalences \w{f:X\to Y} between fibrant objects are
maps inducing an isomorphism
in \w{f\sb{\ast}:\pi\sb{\ast}\mapM(\gamma,\MapCp(Z,X))\to
\pi\sb{\ast}\mapM(\gamma,\MapCp(Z,Y))} for every cofibrant \w{Z\in\eC'} and every
\w[.]{\gamma\in\cI}  Examples include those of \S\S \ref{egtop}-\ref{egchaincx}
with $\cI$ as in \S \ref{egcore}.

In this case a homotopy chain complex $\Lambda$ of length \w{n+1} in
\w{\Ho\sp{\Gamma}\eC} is represented by a sequence of elements
\begin{myeq}\label{eqnhochaincx}
\varphi\sb{i}\in[a\sb{i},\,a\sb{i-1}]\sb{\gamma\sb{i}}~\cong~
\pi\sb{0}\mapM(\gamma\sb{i},\,\MapC(a\sb{i},\,a\sb{i-1}))\hspace{10mm}
(i=1,\dotsc,n+1)
\end{myeq}
\noindent such that
$$
\nu\sb{\ast}(\varphi\sb{i-1},\varphi\sb{i})=0\hspace{5mm}\text{in}\hspace{3mm}
[a\sb{i},\,a\sb{i-2}]\sb{\gamma\sb{i-1}\otimes\gamma\sb{i}}\hspace{10mm}
(i=2,\dotsc n+1)~,
$$
\noindent in the notation of \S \ref{dcorehtpy}.

In particular, when \w[,]{\cI=\{1\}} $\Lambda$ may be described by
a diagram:
\begin{myeq}\label{eqnhochain}
a\sb{n+1}~\xra{\varphi\sb{n+1}}~a\sb{n}~\xra{\varphi\sb{n}}~
a\sb{n-2}~\to~ \dotsc~\to~a\sb{1}~\xra{\varphi\sb{1}}~a\sb{0}~,
\end{myeq}
\noindent in \w{\ho\eC'} such that \w{\varphi\sb{i-1}\circ\varphi\sb{i}=0}
for \w[\vsm.]{i=2,\dotsc n+1}

However, in the context of Massey products (cf.\ \S \ref{smassey}), we do not have
such a model category \w{\eC'} available.  In this case, we let $\eC$ be a
set of DGAs over $R$ with a given homology algebra, \w{\Gamma=\cI\sb{R}}
as in \S \ref{egcore}(b), and a homotopy chain complex $\Lambda$
in \w{\Ho\sp{\Gamma}\eC} is a quasi-isomorphism class of DGAs in $\eC$.
\end{example}

\begin{defn}\label{dhhtb}
Given a path model category $\eM$ with core $\cI$, a category $\eC$ as
in \S \ref{dhchaincx} for \w[,]{\Gamma\subseteq\cI} and a homotopy chain complex
$\Lambda$ of length \w{n+1} for $\Gamma$, the corresponding \emph{$n$-th order
Toda bracket} \w{\llrra{\Lambda}} is defined to be
\w{\llrra{\dK\up{0}}\subseteq
\pi\sb{n-1}\mapM(\gamma\sb{1}\otimes\dotsc\otimes\gamma\sb{n+1},\
\MapK(a\sb{n+1},\,a\sb{0}))}
for some representative \w{\dK\up{0}} of $\Lambda$.
\end{defn}

\begin{remark}\label{rhhtb}
By Theorem \ref{tdke}, \w{\llrra{\Lambda}} is well-defined.
\end{remark}

\begin{mysubsection}{Massey products in DGAs}
\label{smpdga}
Since \w{\ChR} is a model category, we can consider higher Toda brackets
for a differential graded algebra $\bA$, as in \S \ref{smassey}
(we think of $\bA$ as a chain complex, rather than a cochain complex, but
since we allow arbitrary $\bZ$-grading, this is no restriction).

A chain complex $\Lambda$ of length \w{n+1} in \w{\ho\bA} consists of a sequence
\w{(\gamma\sb{i})\sb{i=1}\sp{n+1}} of homology classes in \w[,]{H\sb{\ast}\bA}
with \w{\gamma\sb{i}\cdot\gamma\sb{i+1}=0} for \w[.]{i=1,\dotsc,n}
If we choose an $n$-th order chain complex (that is, a DGA \w[)]{\bA} realizing
$\Lambda$, as above, we obtain the element given by \wref{eqcohelts} in
\w[.]{\lop{n-1}\bA} However, because we are working over \w{\ModR} we can define the
identification \w{\lop{n-1}\bA\cong\Omega\sp{n-1}\bA} using the Dold-Kan
equivalence (essentially, by the homotopy addition theorem \wh cf.\
\cite{MunkrS}), and thus obtain the value
\begin{myeq}\label{eqmasseych}
\sum\sb{t=1}\sp{n}\ (-1)\sp{t}\
H\sp{t}\sb{i-k+t}\cdot H\sp{n-t}\sb{i}~\in A\sb{j+n}
\end{myeq}
\noindent in \w[,]{\Omega\sp{n-1}\bA} which is readily seen to be a
\wwb{j+n-1}-cycle for \w[.]{j\DEF \sum\sb{t=1}\sp{n}\,m\sb{t}}

By comparing this formula with the classical definition of the higher Massey
product (see, e.g., \cite[(V.4)]{TanrH}), we find:
\end{mysubsection}

\begin{prop}\label{pmassey}
The higher Toda brackets in a differential graded algebra \w{\bA} are identical
with the usual higher Massey products.
\end{prop}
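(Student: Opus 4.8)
The plan is to read off the value \eqref{eqmasseych} of the higher Toda bracket in the DGA $\bA$ and to compare it termwise with the standard representative of the higher Massey product coming from a \emph{defining system}, matching as well the two collections of admissible values, so that the two operations coincide as subsets of $H\sb{\ast}\bA$.

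First I would recall the classical definition (following \cite[(V.4)]{TanrH}): given classes $\gamma\sb{1},\dots,\gamma\sb{n+1}\in H\sb{\ast}\bA$ with $\gamma\sb{i}\cdot\gamma\sb{i+1}=0$, a defining system is a family $\{\as{a}{i,j}\}$ indexed by $1\le i\le j\le n+1$, $(i,j)\ne(1,n+1)$, with each $\as{a}{i,i}$ a cycle representing $\gamma\sb{i}$ and $d\as{a}{i,j}=\sum\sb{i\le t<j}\,\overline{\as{a}{i,t}}\cdot\as{a}{t+1,j}$, where $\bar{x}\DEF(-1)\sp{|x|}x$; the associated representative is $\sum\sb{t=1}\sp{n}\,\overline{\as{a}{1,t}}\cdot\as{a}{t+1,n+1}$, and $\langle\gamma\sb{1},\dots,\gamma\sb{n+1}\rangle$ is the set of its homology classes as the defining system varies.

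Next I would set up the dictionary $\as{a}{i-k,\,i}\leftrightarrow H\sp{k}\sb{i}$. By \S \ref{egchaincx}, \S \ref{egdcubepath} and \S \ref{rcubepath}, a generalized $k$-th order nullhomotopy $\Fud{k}{i}$ is determined uniquely by its top component $H\sp{k}\sb{i}\in A\sb{j+k}$ (with $j\DEF\sum\sb{t=i-k}\sp{i}\,m\sb{t}$), and under this identification the coherence conditions \eqref{eqhoccp}/\eqref{eqhocc} together with the differential formula displayed in \S \ref{smassey}(e) become precisely the defining-system relations for $\as{a}{i-k,i}$. This yields a bijection between defining systems for $(\gamma\sb{i})$ and $(n-1)$st order chain complexes $\dK$ in $\bA$ refining the fixed $0$-th order datum. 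Moreover, under the Dold--Kan identification $\lop{n-1}\bA\cong\Omega\sp{n-1}\bA$ (the homotopy addition theorem, cf.\ \cite{MunkrS}) used in \S \ref{smpdga}, the value \eqref{eqmasseych} of $\lra{\dK}$ becomes the classical representative $\sum\sb{t}\,\overline{\as{a}{1,t}}\cdot\as{a}{t+1,n+1}$ up to a single universal sign --- immaterial, since the classical higher Massey product is itself only pinned down up to such a convention. Hence the bijection of defining data carries $\llrra{\Lambda}$ onto $\langle\gamma\sb{1},\dots,\gamma\sb{n+1}\rangle$, which is the assertion of the Proposition.

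The step I expect to be the main obstacle is the \textbf{sign bookkeeping}. The signs in \eqref{eqmasseych} have two sources: the Koszul signs $(-1)\sp{n+k+t}$ in the differential of the iterated cubical path complex $P\sp{n}\bA$ (\S \ref{rcubepath}), and the signs introduced when the matching object $\lop{n-1}\bA$ is replaced by $\Omega\sp{n-1}\bA$ via Dold--Kan / the homotopy addition theorem. One must check that their composite reproduces both the convention $\bar{x}=(-1)\sp{|x|}x$ and the alternating signs in the classical formula. I would handle this by induction on $n$: verify the base cases $n=2$ (the triple Massey product, via $P\bA$ from \eqref{eqpathcc}) and $n=3$ (via \eqref{eqdpathccbou} and \S \ref{egtcubepath}) directly against the known classical signs, and then propagate through the general simplicial-face identities \eqref{eqsimpcomp}. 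Once the signs are aligned on the nose, equality of the two indeterminacy sets follows at once from the bijection of defining systems.
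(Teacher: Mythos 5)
Your proposal is correct and follows essentially the same route as the paper: the paper's argument is precisely the dictionary of \S \ref{smassey} identifying an \wwb{n-1}st order chain complex in \w{\bA} with a defining system via the determining components \w[,]{H\sp{k}\sb{i}} followed by the comparison of the resulting value \wref{eqmasseych} (after the Dold--Kan identification \w[)]{\lop{n-1}\bA\cong\Omega\sp{n-1}\bA} with the classical representative in \cite[(V.4)]{TanrH}. Your explicit plan for the sign bookkeeping (checking the low cases via \wref{eqpathcc} and \wref{eqdpathccbou} and propagating through \wref[)]{eqsimpcomp} merely spells out what the paper leaves as a direct inspection.
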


%
%
\sect{Toda brackets for chain complexes}
\label{ctbcc}

We now study Toda brackets in the category \w{\ChRz} of non-negatively graded
chain complexes over a hereditary ring $R$, such as $\bZ$. It turns out that
in this case even ordinary Toda brackets have a finer ``homological'' structure,
which we describe.

\begin{mysubsection}{Chain complexes over hereditary rings}
\label{ch-cx-homo}
Since $R$ is hereditary, if \w{\Qz(G)} is a functorial free cover of an
$R$-module $G$, we have a projective presentation
$$
0 \to \Qb(G) \xra{\alpha\sp{G}} \Qz(G) \xra{r}G \to 0~,
$$
\noindent where \w[.]{\Qb(G)\DEF\Ker(r)}

We then define the $n$-th \emph{Moore complex} \w{\bM{G}{n}} for an
$R$-module $G$ to be the chain complex with
\w[,]{(M(G,n))\sb{n+1}\DEF\Qb(G)} \w[,]{(M(G,n))\sb{n}\DEF\Qz(G)} and $0$
otherwise, with \w[.]{\partial\sb{n+1}=\alpha\sp{G}} This yields a functor
\w{\whC\colon\grMz\to\ChRz} with
\begin{myeq}\label{eqwhc}
\whC(\cE)~\DEF~\bigoplus\sb{n\geq 0}\ \bM{E\sb{n}}{n}~.
\end{myeq}

Recall that \w{\ChRz} has a model structure in which
quasi-isomorphisms are the weak equivalences, and a chain complexes is cofibrant
if and only if it is projective in each dimension (see \cite[\S 2.3]{HovM}).
Because $R$ is hereditary, any \w{\bA \in \ChRz} is uniquely determined up
to weak equivalence by the graded $R$-module \w{H\sb{\ast}\bA}
(cf.\ \cite[Theorem 3.4]{DoldH}).

Therefore, if we enrich \w{\grMz} over \w{\ChR} by setting
$$
\uHom(\cE,\cF)~\DEF~\hHom(\whC(\cE), \whC(\cF))
$$
\noindent (see \S \ref{egchaincx}), \w{\whC} becomes an
enriched embedding, and in fact:
\end{mysubsection}

\begin{lemma}\label{ldkeq}
The functor \w{\whC\colon\grMz\to\ChRz} is a Dwyer-Kan equivalence over
\w[.]{\ChR}
\end{lemma}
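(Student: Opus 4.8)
The plan is to verify conditions (a) and (b) of Definition \ref{ddke} for the enriched functor $\whC$, regarded as a map of $\ChR$-enriched categories $\grMz\to\ChRz$ with core $\cI\sb{R}$. Condition (a), and the fully-faithful half of (b), are immediate from the way the enrichment on $\grMz$ was set up in \S\ref{ch-cx-homo}: by definition $\MapK(\cE,\cF)=\uHom(\cE,\cF)=\hHom(\whC\cE,\whC\cF)=\MapL(\whC\cE,\whC\cF)$, and the composition in $\grMz$ is transported from that in $\ChRz$, so $\whC$ is literally the identity on mapping objects. In particular each $\whC\colon\MapK(\cE,\cF)\to\MapL(\whC\cE,\whC\cF)$ is an isomorphism, hence a weak equivalence in $\ChR$, which gives (a); and it induces bijections $[\cE,\cF]\sb{\gamma}=\pi\sb{0}\mapM(\gamma,\MapK(\cE,\cF))\xra{\cong}[\whC\cE,\whC\cF]\sb{\gamma}$ for every $\gamma\in\cI\sb{R}$, so $\whC\sb{\ast}\colon\ho\sp{\cI}\grMz\to\ho\sp{\cI}\ChRz$ is fully faithful as an $\cI$-graded functor.

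Everything therefore reduces to showing $\whC\sb{\ast}$ is essentially surjective. I would first record the homology computation: for an $R$-module $G$ the Moore complex $\bM{G}{n}$ is $\Qb(G)\xra{\alpha\sp{G}}\Qz(G)$ concentrated in degrees $n+1,n$ with $\alpha\sp{G}$ injective, so $H\sb{\ast}\bM{G}{n}$ is $G$ in degree $n$ and $0$ elsewhere; taking the direct sum over $n$ gives $H\sb{\ast}\whC(\cE)\cong\cE$ for every $\cE\in\grMz$ (so $\whC$ is in fact injective on objects). Moreover $\Qz(G)$ is free and $\Qb(G)$, being a submodule of a free module over the hereditary ring $R$, is projective, so $\whC(\cE)$ is projective in each degree, i.e.\ cofibrant in $\ChRz$ by \cite[\S 2.3]{HovM}. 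Now given an arbitrary $\bA\in\ChRz$, set $\cE:=H\sb{\ast}\bA\in\grMz$; then $\whC(\cE)$ and $\bA$ have isomorphic homology, so by Dold's theorem \cite[Theorem 3.4]{DoldH} they are weakly equivalent in $\ChRz$. Since $\whC(\cE)$ is cofibrant and all objects of $\ChRz$ are fibrant, a cofibrant replacement of $\bA$ lets one replace this zig-zag of weak equivalences by a chain homotopy equivalence between cofibrant objects; a chain homotopy equivalence is in particular an $\eM$-homotopy equivalence — chain homotopies being exactly those detected by the path functor of Example \ref{egchaincx} — hence an isomorphism in $\ho\sp{\cI}\ChRz$. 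Thus $\bA\cong\whC(\cE)$ there, proving essential surjectivity, and hence the lemma.

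The genuine content is Dold's structure theorem over a hereditary ring; the rest is bookkeeping about the enrichments. The step requiring the most care — and the one I expect to be the main obstacle — is the last one: upgrading the mere weak equivalence $\bA\simeq\whC(H\sb{\ast}\bA)$ produced by Dold to an honest isomorphism in the $\cI$-graded homotopy category $\ho\sp{\cI}\ChRz$. This uses crucially that $\whC$ takes values among the cofibrant objects of $\ChRz$ (so that the naive function complexes $\hHom$ compute the correct $\cI$-homotopy classes out of them) together with the fibrancy of every object of $\ChRz$; if one prefers, one simply restricts $\ChRz$ to its full subcategory of cofibrant objects throughout, which costs nothing for the intended application to Toda brackets, since the complexes $\whC(\cE)$ coming from $\grMz$ are already cofibrant. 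As part of this step I would also check that the composition map $\nu\sb{\ast}$ of \S\ref{dcorehtpy} agrees with composition in the chain homotopy category under the identifications $[\bA,\bB]\sb{\tM{R}{n}}\cong H\sb{n}\hHom(\bA,\bB)$, but this is a routine unwinding of definitions.
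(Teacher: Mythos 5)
Your proposal follows exactly the route the paper intends: in fact the paper gives no separate proof of this lemma at all --- it is presented as an immediate consequence of defining the enrichment on $\grMz$ by transport, $\uHom(\cE,\cF):=\hHom(\whC(\cE),\whC(\cF))$ (so that $\whC$ is an enriched embedding, which is your condition (a) plus full faithfulness), together with the preceding remark that over a hereditary ring any $\bA\in\ChRz$ is determined up to weak equivalence by $H_{*}\bA$ (Dold), which is meant to supply essential surjectivity. Your fill-ins (the computation $H_{*}\whC(\cE)\cong\cE$, projectivity of $\Qb(G)$ over a hereditary ring, cofibrancy of $\whC(\cE)$, and the identification of $\nu_{*}$ with composition of chain homotopy classes) are correct and are precisely what the paper leaves implicit.

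The one point you should state more firmly than you do concerns the last step. Dold's theorem gives you a chain homotopy equivalence between $\whC(H_{*}\bA)$ and a \emph{cofibrant replacement} $Q\bA$, not $\bA$ itself; since morphisms in $\ho^{\cI}\ChRz$ are chain homotopy classes, the quasi-isomorphism $Q\bA\to\bA$ is not invertible there when $\bA$ fails to be cofibrant, so the conclusion ``$\bA\cong\whC(H_{*}\bA)$ in $\ho^{\cI}\ChRz$'' is genuinely false in general: for $\bA=\bZ/2$ concentrated in degree $0$ there is no nonzero chain map $\bA\to\whC(\bZ/2)=(\bZ\xra{2}\bZ)$, so the two are not isomorphic in the naive homotopy category. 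Hence your parenthetical remedy --- restricting $\ChRz$ to its degreewise projective (cofibrant) objects, which is all the applications in this section use, since every $\whC(\cE)$ and every complex appearing in the examples is of this form --- is not ``if one prefers'' but is the reading under which the statement and your argument are correct.
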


Since the right-hand side of \wref{eqwhc} is a coproduct,
we see that \w{\uHom(\cE,\cF)} naturally splits as a product
\begin{myeq}\label{eqmapdecomp}
\prod\sb{n\geq 0}\,
\big(\hHom(\bM{E\sb{n}}{n},\,\bM{F\sb{n}}{n})
\times \hHom(\bM{E\sb{n}}{n},\,\bM{F\sb{n+1}}{n+1})\big)\times P,
\end{myeq}
\noindent where $P$ is a product of similar terms, but with \w[.]{H\sb{0}P=0}
Moreover, since
\begin{myeq}\label{eqhomext}
\pih{\bM{E}{n}}{\bM{F}{n}}\cong\Hom\sb{R}(E,\,F)\ \text{and}\
\pih{\bM{E}{n}}{\bM{F}{n+1}}\cong\Ext\sb{R}(E,\,F),
\end{myeq}
\noindent we see that \wref{eqmapdecomp} is an enriched version of
the Universal Coefficient Theorem for chain complexes, stating that for
chain complexes over a hereditary ring $R$ there is a (split)
short exact sequence:
$$
0\to\displaystyle{\prod\sb{n>0}}\ \Ext\sb{R}(H\sb{n-1}\bA, H\sb{n}\bB)\to
\pih{\bA}{\bB}\to\displaystyle{\prod\sb{n\geq 0}}\
\Hom\sb{R}(H\sb{n}\bA,H\sb{n}\bB)\to 0
$$
\noindent (cf.\ \cite[Corollary 10.13]{DoldA}). Note that in our version
for \w[,]{\grMz} the splitting is natural!

\begin{notn}\label{nmatrix}
From \wref{eqmapdecomp} we see that there are two kinds of indecomposable
maps of chain complexes (and their nullhomotopies) (see \wref[):]{eqhemaps}
\begin{enumerate}
\renewcommand{\labelenumi}{(\alph{enumi})~}
\item `$\Hom$-type' maps \w[,]{H(f):\bM{E}{n}\to\bM{F}{n}} determined by
$$
f\bnz\colon\Qz(E)\to\Qz(F)\hsp \text{and}\hsm f\bnb\colon\Qb(E)\to\Qb(F)~.
$$
\noindent A nullhomotopy \w{H(S):H(f)\sim 0}
is given by \w[,]{S\znb\colon\Qz(E)\to\Qb(F)} the factorization of
\w{f\znz} through \w[.]{\Qb(F)\hra\Qz(F)} If it exists, it is unique.
\item `$\Ext$-type' maps
$$
E(f):\bM{E}{n}\to\bM{F'}{n+1}~,
$$
\noindent determined by \w[.]{f\znb\colon\Qb(E)\to\Qz(F')} A nullhomotopy
\w{E(S):E(f)\sim 0} is given by \w{S\znz\colon\Qz(E)\to\Qz(F')} and
\w[.]{S\bnb\colon\Qb(E)\to\Qb(F')}
\end{enumerate}
\mydiagram[\label{eqhemaps}]{
& & & && \Qb(F\sb{n+1}) \ar@{^{(}->}[d] \\
\mathbf{H(f)}:& \Qb(E\sb{n})\ar@{^{(}->}[d]\ar[r]\sp{f\bnb} &
\Qb(F\sb{n})\ar@{^{(}->}[d]
& \mathbf{E(f)}:&
\Qb(E\sb{n})\ar@{^{(}->}[d]\ar[r]\sp{f\znb} \ar@{.>}[ru]\sp{S\bnb} &
\Qz(F\sb{n+1}) \\
& \Qz(E\sb{n}) \ar@{.>}[ru]\sp{S\znb} \ar[r]\sb{f\znz} & \Qz(F\sb{n}) &&
\Qz(E\sb{n}) \ar@{.>}[ru]\sb{S\znz}&
}
\end{notn}

\begin{mysubsection}{Secondary chain complexes in \ww{\grMz}}
\label{ssccc}
In light of the above discussion, we see that any secondary chain complex
\begin{myeq}\label{eqssccc}
\whC(\cE)~\xra{f}~\whC(\cF)~\xra{g}~\whC(\cG)~\xra{h}~\whC(\ccH)
\end{myeq}
\noindent in the \ww{\ChR}-enriched category \w{\grMz} is a direct sum of secondary
chain complexes of one of the following four elementary forms:

\begin{myeq}\label{eqehheh}
\begin{split}
\bM{E\sb{n}}{n}&~\xra{H(f)\top E(f)}~
\bM{F\sb{n}}{n}\oplus\bM{F\sb{n+1}}{n+1}~\xra{H(g)\bot E(g)}~\\
&\bM{G\sb{n+1}}{n+1}~\xra{H(h)}~\bM{H\sb{n+1}}{n+1}~,
\end{split}
\end{myeq}

\begin{myeq}\label{eqehhee}
\begin{split}
\bM{E\sb{n}}{n}&~\xra{H(f)\top E(f)}~
\bM{F\sb{n}}{n}\oplus\bM{F\sb{n+1}}{n+1}~\xra{H(g)\bot E(g)}~\\
&\bM{G\sb{n+1}}{n+1}~\xra{E(h)}~\bM{H\sb{n+2}}{n+2}~,
\end{split}
\end{myeq}

\begin{myeq}\label{eqhehhe}
\begin{split}
\bM{E\sb{n}}{n}& ~\xra{H(f)}~\bM{F\sb{n}}{n}~\xra{H(g)\top E(g)}\\
&\bM{G\sb{n}}{n}\oplus\bM{G\sb{n+1}}{n+1}~\xra{H(h)\bot E(h)}~
\bM{H\sb{n+1}}{n+1}
\end{split}
\end{myeq}

\begin{myeq}\label{eqeehhe}
\begin{split}
\bM{E\sb{n}}{n}& ~\xra{E(f)}~\bM{F\sb{n+1}}{n+1}~\xra{H(g)\top E(g)}\\
&\bM{G\sb{n+1}}{n+1}\oplus\bM{G\sb{n+2}}{n+2}~\xra{H(h)\bot E(h)}~
\bM{H\sb{n+2}}{n+2}
\end{split}
\end{myeq}

Two additional hypothetical forms, namely:
\begin{enumerate}
\renewcommand{\labelenumi}{(\roman{enumi})~}
\item \w{\bM{E\sb{n}}{n}~\xra{H(f)}~\bM{F\sb{n}}{n}~\xra{H(g)}~
\bM{G\sb{n}}{n}~\xra{H(h)}~\bM{H\sb{n}}{n}}
\item \w{\bM{E\sb{n}}{n}~\xra{E(f)}~\bM{F\sb{n+1}}{n+1}~\xra{E(g)}~
\bM{G\sb{n+2}}{n+2}~\xra{E(h)}~\bM{H\sb{n+3}}{n+3}}
\end{enumerate}
in fact are irrelevant to Toda brackets, for dimensional reasons.

Moreover, the four elementary secondary chain complexes may or may not split
further into one of the following six \emph{atomic} forms:
\begin{enumerate}
\renewcommand{\labelenumi}{(\alph{enumi})~}
\item \w{\bM{E\sb{n}}{n}~\xra{H(f)}~\bM{F\sb{n}}{n}~\xra{H(g)}~
\bM{G\sb{n}}{n}~\xra{E(h)}~\bM{H\sb{n+1}}{n+1}} and two similar
cases with a single $E$-term;
\item \w{\bM{E\sb{n}}{n}~\xra{H(f)}~\bM{F\sb{n}}{n}~\xra{E(g)}~
\bM{G\sb{n+1}}{n+1}~\xra{E(h)}~\bM{H\sb{n+2}}{n+2}} and two similar
cases with a single $H$-term.
\end{enumerate}
\end{mysubsection}

\begin{mysubsection}{Secondary Toda brackets in \ww{\grMz}}
\label{sstbc}
By Definition \ref{dhhtb}, a secondary Toda bracket in the \ww{\ChR}-enriched
category \w{\grMz} is associated to a homotopy chain complex $\Lambda$ of
length $3$ in \w{\ho\grMz} as in \wref[.]{eqnhochain} This means that we replace the
actual chain maps in each of the twelve examples of \S \ref{ssccc} by
their homotopy classes: that is, elements in \w{\Hom\sb{R}(E,F)}
or \w[,]{\Ext\sb{R}(E,F')} respectively.

The compositions \w{\Hom(E,F)\otimes\Ext(F,G)\to\Ext(E,G)}
\w{\Ext(E,F)\otimes\Hom(F,G)\to\Ext(E,G)} simply define the functoriality of
\w[,]{\Ext} while \w{\Ext(E,F)\otimes\Ext(F,G)\to\Ext(E,G)} vanishes for
dimension reasons. Nevertheless, the associated Toda bracket may be
non-trivial.

Note that in this case, as in the original construction of Toda in \cite{TodC}
(see also \cite{SpanS}), the subset \w{\llrra{\Lambda}} of
\w{\pih{\susp\cE}{\ccH}} is actually a double coset of the group
$$
 (\Sigma f)\sp{\s}\pih{\susp\cF}{\ccH}+ h\sb{\s}\pih{\susp\cE}{\cG}~,
$$
\noindent so we can think of \w[,]{\llrra{\Lambda}} which we usually
denote simply by \w[,]{\langle h,g,f \rangle} as taking value in the quotient
abelian group
\begin{myeq}\label{eqindet}
\langle h,g,f \rangle~\in~(\Sigma f)\sp{\s}\pih{\susp\cF}{\ccH}~
\backslash~\pih{\susp\cE}{\ccH}~/~h\sb{\s}\pih{\susp\cE}{\cG}~.
\end{myeq}

Thus the elementary examples of \S \ref{ssccc} may be interpreted as
secondary operations in \w[,]{\Ext\sb{R}} defined under certain vanishing
assumptions, and with an explicit indeterminacy (which may be less than
that indicated in \wref{eqindet} in any specific case).

For example, in \wref{eqehhee} (case (e) above), the operation is defined
for elements in the pullback of
$$
  \begin{xy}
 (0,40)*+{\Hom(E\sb{n},F\sb{n}) \otimes \Ext(F\sb{n},G\sb{n+1})
 \otimes \Ext(G\sb{n+1},H\sb{n+2})}="rhs";
 (45,20)*+{\Ext(E\sb{n},G\sb{n+1}) \otimes \Ext(G\sb{n+1},H\sb{n+2})}="image";
 (0,0)*+{\Ext(E\sb{n},F\sb{n+1})\otimes \Hom(F\sb{n+1},G\sb{n+1})
 \otimes \Ext(G\sb{n+1},H\sb{n+2})}="lhs";
{\ar_-{\comp \otimes \Id} "lhs";"image"};
{\ar^-{\comp \otimes \Id} "rhs";"image"};
  \end{xy}
$$
\noindent and takes value in the quotient group
\w[,]{\Ext(E\sb{n},H\sb{n+1})~/~h\sb{\s}\Hom(E\sb{n},G\sb{n})}
where \w{h\sb{\s}\Hom(E\sb{n},G\sb{n})} refers to the image of the given
element \w{h\in\Ext(G\sb{n},H\sb{n+1})} under precomposition with all
elements of \w[.]{\Hom(E\sb{n},G\sb{n})}

It turns out that cases (a) and (d)  are trivial for dimension reasons, but
we shall now provide examples of non-triviality for four of the remaining cases.
\end{mysubsection}

\begin{example}\label{egone}
Consider the homotopy chain complex $\Lambda$ in \w{\ho\grMz} given by
\w[,]{E\sb{0}=\bZ/2} \w[,]{F\sb{0}=\bZ/4} \w[,]{G\sb{0}=\bZ/2} and
\w[,]{H\sb{1}=\bZ/2} with the corresponding maps
\begin{equation*}
\begin{split}
f=2&\in\bZ/2~=~\Hom(E\sb{0},\ F\sb{0})~=~Hom(\bZ/2,\ \bZ/4)\\
g=1&\in\bZ/2~=~\Hom(F\sb{0},\ G\sb{0})~=~\Hom(\bZ/4,\bZ/2)\\
h=2&\in\bZ/2~=~\Ext(F\sb{0},\ H\sb{1})~=~\Ext(\bZ/2,\bZ/2)~.
\end{split}
\end{equation*}

By Remark \ref{rhhtb}, we may choose any cofibrant chain complexes in \w{\ChZ}
to realize $\Lambda$, not necessarily the functorial versions \w[,]{\whC(\cE)}
and so on. In our case we shall use the  following minimal secondary chain complex:
$$
 \begin{xy}
 (110,40)*+{D\sb{2}=\bZ}="b1d";
(110,20)*+{D\sb{1}=\bZ}= "z1d" ;
{\ar@{^{(}->}^{\alpha\sb{1}\sp{\bD}=2}"b1d";"z1d"};
 (70,20)*+{C\sb{1}=\bZ}="b0c";
(70,0)*+{C\sb{0}=\bZ}= "z0c";
{\ar@{^{(}->}^{\alpha\sb{0}\sp{\bC}=2}"b0c";"z0c"};
{\ar^{h\sb{0}\sp{10}=1}"b0c";"z1d"};
 (30,20)*+{B\sb{1}=\bZ}="b0b";
(30,0)*+{B\sb{0}=\bZ}= "z0b";
{\ar@{^{(}->}^{\alpha\sb{0}\sp{\bB}=4}"b0b";"z0b"};
{\ar_{g\sb{0}\sp{00}=1}"z0b";"z0c"};
{\ar@{-->}^{g\sb{0}\sp{11}=2}"b0b";"b0c"};
{\ar@{.>}^{T\sb{0}\sp{11}=1}"b0b";"b1d"};
(-10,20)*+{A\sb{1}= \bZ}="b0a";
(-10,0)*+{A\sb{0}= \bZ}= "z0a";
{\ar@{^{(}->}^{\alpha\sb{0}\sp{\bA}=2}"b0a";"z0a"};
{\ar_{f\sb{0}\sp{00}=2}"z0a";"z0b"};
{\ar@{-->}^{f\sb{0}\sp{11}=1}"b0a";"b0b"};
{\ar@{.>}|(.5){\hole}^(.38){S\sb{0}\sp{01}=1}"z0a";"b0c"};
 \end{xy}
$$

The Toda bracket is given by:
$$
 \xymatrixcolsep{4cm}
 \xymatrix{
 (\Sigma A)\sb{2}=\bZ
\ar@{-->}[r]^-{1}\ar@{^{(}->}[d]^-{-2}&
D\sb{2}=\bZ \ar@{^{(}->}[d]^-{2}\\
 (\Sigma A)\sb{1}=\bZ \ar[r]\sp{-1}& D\sb{1}=\bZ
 }
$$
\noindent The indeterminacy is given by
\begin{equation*}
\begin{split}
&(\Sigma f)\sp{\s}\pih{\susp\cF}{\ccH}+ h\sb{\s}\pih{\susp\cE}{\cG}~\\
=&~\Sigma f\sp{\s}\Hom(F\sb{0},H\sb{1})+ h\sb{\s}\Hom(E\sb{0},G\sb{1})~=~
 2 \cdot (\bZ/2)+ 0~=~0~.
\end{split}
\end{equation*}
\noindent
\noindent Hence the Toda bracket \w{\langle h,g,f \rangle} does not vanish.
\end{example}

\begin{example}\label{egtwo}
Consider the homotopy chain complex in \w{\ho\grMz} given by
\w[,]{E\sb{0}=\bZ/2} \w[,]{F\sb{1}=\bZ/4} \w[,]{G\sb{1}=\bZ/4} and
\w[,]{H\sb{2}=\bZ} with the corresponding maps
\w[,]{f=1\in\bZ/2=\Ext(\bZ/2,\bZ/4)} \w[,]{g=2\in\bZ/4=\Hom(\bZ/4,\bZ/4)} and
\w[.]{h=2\in\bZ/4=\Ext(\bZ/4,\bZ)}

We choose the following associated secondary chain complex:
$$
 \begin{xy}
 (110,40)*+{D\sb{2}= \bZ}= "z2d";
(70,40)*+{C\sb{2}= \bZ}= "b1c";
(70,20)*+{C\sb{1}= \bZ}= "z1c";
{\ar@{^{(}->}^{\alpha\sb{1}\sp{\bC}=4}"b1c";"z1c"};
{\ar^{h\sb{1}\sp{10}=2} "b1c";"z2d"};
(30,40)*+{B\sb{2}= \bZ}= "b1b";
(30,20)*+{B\sb{1}= \bZ}= "z1b";
{\ar@{^{(}->}^{\alpha\sb{1}\sp{\bB}=4}"b1b";"z1b"};
{\ar_{g\sb{0}\sp{00}=2}"z1b";"z1c"};
{\ar@{-->}^{g\sb{0}\sp{11}=2}"b1b";"b1c"};
{\ar@{..>}|(.5){\hole}^(.4){T\sb{1}\sp{00}= 1}"z1b";"z2d"};
(-10,20)*+{A\sb{1}= \bZ}="b0a";
(-10,0)*+{A\sb{0}= \bZ}= "z0a";
{\ar@{^{(}->}^{\alpha\sb{0}\sp{\bA}=2}"b0a";"z0a"};
{\ar_{f\sb{0}\sp{10}=1}"b0a";"z1b"};
{\ar@{..>}_(.4){S\sb{0}\sp{00}= 1}"z0a";"z1c"};
 \end{xy}
$$

The Toda bracket is represented as follows:
$$
 \xymatrixcolsep{4cm}
 \xymatrix{
 (\Sigma A)\sb{2}=\bZ \ar[r]^-{1}\ar@{^{(}->}[d]^-{-2}& D\sb{2}=\bZ \\
 (\Sigma A)\sb{1}=\bZ &
 }
$$
\noindent which is a generator of \w[.]{\Ext(\bZ/2,\bZ)=\bZ/2}
The indeterminacy is
\begin{equation*}
\begin{split}
&(\Sigma f)\sp{\s}\pih{\susp\cF}{\ccH}+ h\sb{\s}\pih{\susp\cE}{\cG}\\
=&~\Sigma f\sp{\s}\Hom(F\sb{0},H\sb{1})+ h\sb{\s}\Hom(E\sb{0},G\sb{1})~=~
1 \cdot 0+ 2 \cdot (\bZ/2)~=~0~.
\end{split}
\end{equation*}
\noindent Hence the Toda bracket \w{\langle h,g,f \rangle} does not vanish.
\end{example}

\begin{example}\label{egthree}
Consider the homotopy chain complex in \w{\ho\grMz} given by
\w[,]{E\sb{0}=\bZ/8} \w[,]{F\sb{1}=\bZ/4} \w[,]{G\sb{1}=\bZ/4} and
\w[,]{H\sb{2}=\bZ} with the corresponding maps
\w[,]{f=1\in\bZ/4=\Hom(\bZ/8,\bZ/4)} \w[,]{g=2\in\bZ/4=\Ext(\bZ/4,\bZ/4)} and
\w[.]{h=1\in\bZ/4=\Ext(\bZ/4,\bZ)}

We may choose the following associated secondary chain complex:
$$
 \begin{xy}
 (110,40)*+{D\sb{2}=\bZ}="z2d";
 (70,40)*+{C\sb{2}=\bZ}="b1c";
 (70,20)*+{C\sb{1}=\bZ}="z1c";
{\ar@{^{(}->}^{\alpha\sb{1}\sp{\bC}=4}"b1c";"z1c"};
{\ar^{h\sb{1}\sp{10}=1}"b1c";"z2d"};
 (30,20)*+{B\sb{1}=\bZ}="b0b";
(30,0)*+{B\sb{0}=\bZ}= "z0b";
{\ar@{^{(}->}^{\alpha\sb{0}\sp{\bB}=4}"b0b";"z0b"};
{\ar^{g\sb{0}\sp{10}=2}"b0b";"z1c"};
(-10,20)*+{A\sb{1}= \bZ}="b0a";
(-10,0)*+{A\sb{0}= \bZ}= "z0a";
{\ar@{^{(}->}_{\alpha\sb{0}\sp{\bA}=8}"b0a";"z0a"};
{\ar_{f\sb{0}\sp{00}=1}"z0a";"z0b"};
{\ar@{-->}_{f\sb{0}\sp{11}=2}"b0a";"b0b"};
{\ar@{..>}^{S\sb{0}\sp{11}=1}"b0a";"b1c"};
 \end{xy}
$$

The Toda bracket is given by:
$$
 \xymatrixcolsep{4cm}
 \xymatrix{
 (\Sigma A)\sb{2}=\bZ \ar[r]^-{1}\ar@{^{(}->}[d]^-{-8}& D\sb{2}=\bZ \\
 (\Sigma A)\sb{1}=\bZ &
 }
$$
\noindent which is a generator of \w[.]{\Ext(\bZ/8,\bZ)=\bZ/8}
The indeterminacy is
$$
 (\Sigma f)\sp{\s}\pih{\susp\cF}{\ccH}+ h\sb{\s}\pih{\susp\cE}{\cG}~=~
f\sp{\s} \Ext(F\sb{0},H\sb{2})+h\sb{\s} \Hom(E\sb{0},G\sb{1})~.
$$
\noindent A generator of
\w{f\sp{\s} \Ext(F\sb{0},H\sb{2})= 1\cdot \Ext(\bZ/4,\bZ)=\bZ/4}
in \w{\Ext(E\sb{0},H\sb{2})} is given by
$$
 \xymatrixcolsep{2.5cm}
 \xymatrix{
 (\Sigma A)\sb{2}=\bZ \ar@{-->}[r]^-{2}\ar@{^{(}->}[d]^-{-8} &
(\Sigma B)\sb{2}=\bZ \ar[r]^-{1}\ar@{^{(}->}[d]^-{-4}  & D\sb{2}=\bZ\\
 (\Sigma A)\sb{1}=\bZ \ar[r]^-{1} & (\Sigma B)\sb{1}=\bZ  &
 }
$$
\noindent while a generator of
\w{h\sb{\s} \Hom(E\sb{0},G\sb{1})= 1 \cdot \Hom(\bZ/8,\bZ/4)=\bZ/4}
in \w{\Ext(E\sb{0},H\sb{2})} is given by
$$
 \xymatrixcolsep{3cm}
 \xymatrix{
(\Sigma A)\sb{2}=\bZ \ar[r]^-{2}\ar@{^{(}->}[d]^-{-8} & D\sb{2}=\bZ\\
(\Sigma A)\sb{1}=\bZ &
 }
$$
\noindent so the total indeterminacy is the subgroup
\w[.]{\bZ/4\subseteq\bZ/8=\Ext(\bZ/8,\bZ)=\Ext(E\sb{0},H\sb{2})}
Since the Toda bracket \w{\langle h,g,f \rangle} is represented by a
generator of this \w[,]{\bZ/8} it does not vanish.
\end{example}

\begin{example}\label{egzero}
Consider the homotopy chain complex in \w{\ho\grMz} given by
\w[,]{E\sb{0}=\bZ/16} \w[,]{F\sb{0}=\bZ/8} \w[,]{F\sb{1}=\bZ/16}
\w[,]{G\sb{1}=\bZ/16} and \w[,]{H\sb{2}=\bZ/16} with the corresponding maps
\w[,]{f=1\in\bZ/8=\Hom(E\sb{0},F\sb{0})}
\w[,]{f\sp{\prime}\in\bZ/16=\Ext(E\sb{0},F\sb{1})}
\w[,]{g=4\in\bZ/8=\Ext(F\sb{0},G\sb{1})}
\w{g\sp{\prime}\in\bZ/16=\Hom(F\sb{1},G\sb{1})} and
\w[.]{h=2\in\bZ/16=\Hom(G\sb{1},H\sb{1})}

We may choose the following associated secondary chain complex:
$$
 \begin{xy}
 (125,55)*+{D\sb{2}=\bZ}="b1d";
 (125,30)*+{D\sb{1}=\bZ}="z1d";
{\ar@{^{(}->}^{\alpha\sb{1}\sp{\bD}=16}"b1d";"z1d"};
 (85,55)*+{C\sb{2}=\bZ}="b1c";
 (85,30)*+{C\sb{1}=\bZ}="z1c";
{\ar@{^{(}->}|(.44){\hole}^(.6){\alpha\sb{1}\sp{\bC}=16}"b1c";"z1c"};
{\ar@{-->}^{h\sb{1}\sp{11}=2}"b1c";"b1d"};
{\ar^{h\sb{1}\sp{00}=2}"z1c";"z1d"};
(40,55)*+{B\sb{2}\sp{\prime}=\bZ}="b1b";
(40,30)*+{B\sb{1}\sp{\prime}=\bZ}="z1b";
(40,25)*+{\oplus};
(40,20)*+{B\sb{1}=\bZ}="b0b";
(40,0)*+{B\sb{0}=\bZ}= "z0b";
{\ar@{^{(}->}|(.45){\hole}^(.6){\alpha\sb{1}\sp{\bB}=16}"b1b";"z1b"};
{\ar^{(g\sp{\prime})\sb{1}\sp{00}=8}"z1b";"z1c"};
{\ar@{-->}^{(g\sp{\prime})\sb{1}\sp{11}=8}"b1b";"b1c"};
{\ar@{^{(}->}^{\alpha\sb{0}\sp{\bB}=8}"b0b";"z0b"};
{\ar_{g\sb{0}\sp{10}=4}"b0b";"z1c"};
{\ar@{..>}@<.5ex>^(.45){T\sb{1}\sp{01}=1}"z1b";"b1d"};
{\ar@{..>}_{T\sb{0}\sp{00}=1}"z0b";"z1d"};
(-5,20)*+{A\sb{1}= \bZ}="b0a";
(-5,0)*+{A\sb{0}= \bZ}= "z0a";
{\ar@{^{(}->}^{\alpha\sb{0}\sp{\bA}=16}"b0a";"z0a"};
{\ar_{f\sb{0}\sp{00}=1}"z0a";"z0b"};
{\ar@{-->}_{f\sb{0}\sp{11}=2}"b0a";"b0b"};
{\ar^{(f\sp{\prime})\sb{0}\sp{10}=1}"b0a";"z1b"};
{\ar@{..>}@<.3ex>@/^1.2pc/^(.42){S\sb{0}\sp{11}=1}"b0a";"b1c"};
 \end{xy}
$$

The Toda bracket is given by:
$$
 \xymatrixcolsep{4.5cm}
 \xymatrix{
 (\Sigma A)\sb{2}=\bZ \ar@{-->}[r]^-{h\sb{1}\sp{11}\circ S\sb{0}\sp{11}-T\sb{1}\sp{01} \circ (f\sp{\prime})\sb{0}\sp{10}=1}
 \ar@{^{(}->}[d]^-{-16}&  D\sb{2}=\bZ\ar@{^{(}->}[d]^-{16} \\
 (\Sigma A)\sb{1}=\bZ \ar[r]^-{-f\sb{0}\sp{00} \circ T\sb{0}\sp{00}=-1}& D\sb{1}=\bZ
 }
$$
\noindent which is a generator of \w[.]{\Hom(\bZ/16,\bZ/16)=\bZ/16}
The indeterminacy is
$$
(\Sigma f)\sp{\s}\pih{\susp\cF}{\ccH}+ h\sb{\s}\pih{\susp\cE}{\cG}~=~
f\sp{\s} \Hom(F\sb{0},H\sb{1})+h\sb{\s} \Hom(E\sb{0},G\sb{1})~.
$$
\noindent A generator of
\w{f\sp{\s} \Hom(F\sb{0},H\sb{1})= 1\cdot \Hom(\bZ/8,\bZ/16)=\bZ/8}
in \w{\Hom(E\sb{0},H\sb{1})} is given by
$$
 \xymatrixcolsep{2.5cm}
 \xymatrix{
 (\Sigma A)\sb{2}=\bZ \ar@{-->}[r]^-{f\sb{0}\sp{11}=2}\ar@{^{(}->}[d]^-{-16} &
(\Sigma B)\sb{2}=\bZ \ar@{-->}[r]^-{-1}\ar@{^{(}->}[d]^-{-8}  &
D\sb{2}=\bZ\ar@{^{(}->}[d]^-{16}\\
 (\Sigma A)\sb{1}=\bZ \ar[r]^-{f\sb{0}\sp{00}=1} & (\Sigma B)\sb{1}=\bZ\ar[r]^-{2} &
D\sb{1}=\bZ
 }
$$
\noindent while a generator of
\w{h\sb{\s} \Hom(E\sb{0},G\sb{1})= 2 \cdot \Hom(\bZ/16,\bZ/16)=\bZ/8}
in \w{\Hom(E\sb{0},H\sb{1})} is given by
$$
 \xymatrixcolsep{3cm}
 \xymatrix{
(\Sigma A)\sb{2}=\bZ \ar@{-->}[r]^-{-2}\ar@{^{(}->}[d]^-{-16} &
D\sb{2}=\bZ\ar@{^{(}->}[d]^-{16}\\
(\Sigma A)\sb{1}=\bZ \ar[r]^-{2} & D\sb{1}=\bZ&
 }
$$
\noindent so the Toda bracket \w{\langle h,g,f \rangle} does not vanish.
\end{example}

\begin{remark}
See \cite[\S 6.12]{BauH} for a calculation relating Toda brackets in topology
with a certain operation in homological algebra.
\end{remark}

\end{document}